\documentclass[12pt,a4paper]{amsart} \usepackage[english]{babel}
\usepackage{graphicx} \usepackage{mathtools} \usepackage{amsmath}
\usepackage{amssymb} \usepackage{amsthm}
\usepackage{physics} 
\usepackage{color}
\usepackage{float}
\usepackage{indentfirst}
\usepackage{comment}
\usepackage{url,orcidlink}
\usepackage{listings}
\usepackage{csquotes}
\theoremstyle{plain}
\newtheorem{theorem}{Theorem}
\newtheorem*{theorem*}{Theorem}
\newtheorem{prop}[theorem]{Proposition}
\newtheorem{lemma}[theorem]{Lemma}
\newtheorem{cor}[theorem]{Corollary}

\newtheorem*{MainTheorem1}{Main Theorem 1}
\newtheorem*{MainTheorem2}{Main Theorem 2}

\theoremstyle{remark}
\newtheorem{remark}[theorem]{Remark}

\theoremstyle{definition}

\def\AA{\mathbb{A}}

\def\FF{\mathbb{F}}
\def\NN{\mathbb{N}}
\def\QQ{\mathbb{Q}}

\def\ZZ{\mathbb{Z}}

\def\hh{\mathcal{H}}
\def\ll{\mathcal{L}}

\newcommand{\N}{\operatorname{N}}

\title[Metaconjugation and Quadratic Forms]{Metaconjugation and
  Quadratic Forms}

\author[Cardoso]{Adriana Cardoso\,\orcidlink{0000-0003-0669-1191}}
\address{CMUP --- Centro de Matemática da Universidade do Porto,
  Faculdade de Ci\^encias da Universidade do Porto, Rua do Campo
  Alegre, 4169-007 Porto, Portugal}
\email{adriana.cardoso,antonio.leite,ajmachia@fc.up.pt}
\author[Leite]{António Leite\,\orcidlink{0009-0001-9519-1350}}
\author[Machiavelo]{Ant\'onio Machiavelo\,\orcidlink{0000-0002-7595-7275}}

\author[Moreira]{Rafael Moreira\,\orcidlink{0009-0005-3786-3410}}

\author[Ro\c cadas]{Lu\'is Ro\c cadas\,\orcidlink{0000-0001-5579-5734}}
\address{Centro de Matemática, Universidade do Minho --- Polo
  CMAT-UTAD Universidade de Tr\'as-os-Montes e Alto Douro, Quinta de
  Prados, 5001-801 Vila Real, Portugal}
\email{rocadas@utad.pt}

\date{\today}
\begin{document}

\begin{abstract}
  We present a quaternionic proof that the quadratic form
  $t^2+2x^2+5y^2+10z^2$ represents all positive integers, and that the
  form $t^2+x^2+7y^2+7z^2$ represents all natural numbers which are
  neither equal to $3 \cdot 7^\ell$ nor equal to $6 \cdot 7^\ell$, for
  any $\ell \in \NN_0$. To do this we introduce a technique that may
  be useful for other purposes, and that we call
  \textit{metaconjugation}.
\end{abstract}

\keywords{Quadratic Forms, Quaternions, Quaternion Algebras,
  Quaternion Orders, Diophantine Equations, Metaconjugation.}

\maketitle
\section{Introduction}

The problem of determining the positive integers that are represented
by a quadratic form has quite a long history. In 1770, Lagrange proved
the \textit{four-square theorem}, which states that the form
$t^2+x^2+y^2+z^2$ is universal, that is, represents all natural
numbers. In 1917, Ramanujan \cite{Ramanujan} investigated forms of the
shape $at^2+bx^2+cy^2+dz^2$, where $a \leq b \leq c \leq d$ are
positive integers, and concluded that there are at most 55 such
universal forms. Later, Dickson determined that 54 of Ramanujan's
quadratic forms were indeed universal. Louis J. Mordell, in \S{4} of
Chapter~19 of his book on Diophantine equations, deals with the
problem of the representation of numbers by the quaternary forms
$t^2+bcx^2+cay^2+abz^2$, where $a,b,c$ are non-zero integers, by
describing a method that works for $c=1$ and some specific values of
$a$ and $b$ (see \cite[pp.~169--173]{MordellBook}).

The above results were all derived using the theory of quadratic
forms. In 1919, Hurwitz \cite{Hurwitz} was able to give an alternative
proof of the four-square theorem, using quaternions, in particular a
quaternion order whose elements are now known as the \textit{Hurwitz
  integers}. Along this line, in 2008 Deutsch \cite{Deutsch} proved
that eight of Ramanujan's quadratic forms are universal through a
quaternionic argument, using the fact that these forms are norm forms
of quaternion orders contained in Euclidean rings, but his strategy
failed to work for the form $t^2+2x^2+5y^2+10z^2$, despite being the
norm form of an order. Later, following Deutsch's work, Fitzgerald
\cite{Fitzgerald} found a basis for a Euclidean ring that contains the
order associated to this form. However, he did not manage to provide a
proof of the universality of this form using quaternions. Fitzgerald
was also able to give a basis for a maximal order contained in the
algebra $\left(\frac{-1,-7}{\QQ}\right)$ which is known to be a (left
and right) principal ideal domain (PID) but not Euclidean for the
norm. The form $t^2+x^2+7y^2+7z^2$ is the norm form of the order of
this algebra consisting of the quaternions with integral coefficients
for the standard basis, i.e.~$\ZZ[1,i,j,k]$, which we will call the
\textit{Lipschitz integers} of that order.

The purpose of this paper is to provide a quaternion approach for
finding the positive integers represented by the definite quadratic
quaternary forms $t^2+2x^2+5y^2+10z^2$ and $t^2+x^2+7y^2+7z^2$, which
we designate as \textbf{the 1-2-5-10 form} and \textbf{the 1-1-7-7
  form}, respectively.  For that, we use the arithmetic properties of
appropriate maximal orders, and our arguments are built upon a process
that we call \textbf{metaconjugation}, which we use to transform
elements from a maximal order into elements of the same norm that
belong to the Lipschitz integers of the respective
algebra. Metaconjugation refers here to the transformation of an
element $\gamma$, of a given order, into the element
$\sigma \gamma \tau^{-1}$ where $\sigma, \tau$ are in that order and
have the same norm. The idea is then to metaconjugate an element of a
maximal order containing all Lipschitz integers into a Lipschitz
integer. Using that we show the following two results.

\begin{MainTheorem1}
  The quadratic form $t^2+2x^2+5y^2+10z^2$ is universal, that is, it
  represents all $n \in \NN$.
\end{MainTheorem1}

\begin{MainTheorem2}
  The quadratic form $t^2+x^2+7y^2+7z^2$ represents exactly the
  natural numbers not of the form
  $n = 2^\varepsilon \cdot 3 \cdot 7^\ell$, for
  $\varepsilon \in \{0,1\}$ and $\ell \in \NN_0$.
\end{MainTheorem2}
\section{The general setting}\label{Sec:general_set}

Let $\AA:=\left(\frac{-a,-b}{\QQ}\right)$, where $a, b$ are non-zero
square-free integers, be the rational quaternion algebra with the
$\QQ$-basis $1, i, j,k$ and multiplication determined by $i^2=-a$,
$j^2=-b$, and $k=ij$. One wishes to determine the natural numbers $n$
for which there is an element $\alpha = t+x i+y j+z k$ in the order of
the Lipschitz quaternions of $\AA$, $\ll_{a,b}:=\ZZ[1,i,j,k]$, such
that $\N(\alpha)=n$, where
$\N(\alpha) = \alpha\bar{\alpha} = t^2+ax^2+by^2+abz^2$ is the
\emph{norm} of $\alpha$. Here $\bar{\alpha} = t-x i-y j-z k$ is the
\textit{conjugate} of $\alpha$. Because conjugation is an involution,
an order $\hh$ of $\AA$ is a left PID if and only if is a right PID,
and we thus may just call it a PID and forget about the adjectives
left and right.  Observe that the order $\ll_{a,b}$ can never be a
PID, as otherwise it would be maximal (cf. \cite[Prop.~1]{OrdersPID}),
and one can easily prove this is not true. To tackle the problem of
the representation of positive integers by the just mentioned norm
form, in the two cases under consideration in this paper, we will make
use of a maximal order $\hh_{a,b}$ that contains $\ll_{a,b}$, and is a
PID.

Recall that an element $\pi$ of an order $\hh$ of $\AA$ is said to be
\emph{prime} if $\N(\pi)=p$, a rational prime. In this case, one says
that $\pi$ is a prime of $\hh$ \emph{above} $p$. An element of $\hh$
is said to be \textit{primitive} if it is not divisible, in $\hh$, by
any rational prime. If $\hh$ is a PID, then given any primitive
element $\alpha\in\hh$, and any factorization of its norm into a
product of rational primes, $\N(\alpha)=p_1 p_2\cdots p_r$ (here order
matters), there is a factorization $\alpha =\pi_1 \pi_2\cdots \pi_r$
such that $\pi_t$ is a prime above $p_t$, for {\small
  $t=1,2,\ldots,r$}, and such a factorization is unique up to
\emph{unit-migration} (see \cite[Theorem 11.4.8]{Voight} and
\cite[Theorem 2]{OrdersPID}). This means that any other factorization
that is \emph{modelled} on the same factorization of $N(\alpha)$
(i.e.~the order of the $p_t$ is the same) must have the form
\[\alpha = (\pi_1 u_1) (u^{-1}_1 \pi_2 u_2) 
  \cdots (u^{-1}_{r-1} \pi_{r-1} u_r) (u^{-1}_r \pi_r),\]
where $u_t\in\hh^*$, the group of units of $\hh$, for all {\small
  $t=1,2,\ldots,r$}.

If $\gamma\in\hh$ is such that $p\mid \gamma$, then
$p\mid \Tr(\gamma)=\gamma+\bar{\gamma}$ and $p^2\mid\N(\gamma)$
(recall that $\Tr(\gamma)$ and $\N(\gamma)$ are integers --- see
Corollary~10.3.6 and Lemma~10.3.7 in \cite{Voight}). In particular, if
$i, j\in\hh$, as $\N(i)=a$ and $\N(j)=b$ are both square-free, we see
that $i$ and $j$ are primitive in any order containing $\ll_{a,b}$.
The next lemma guarantees the existence, for all primes $p$ and in any
order $\hh$ containing $\ll_{a,b}$, of a primitive quaternion with
norm equal to a multiple of $p$. We will then point out that it
follows from this that, when $\hh$ is a PID containing $\ll_{a,b}$,
the norm form for $\hh$ is universal.

\begin{lemma} Let $a,b\in\ZZ$ be square-free integers, and $\ll_{a,b}$
  as above. Let $\hh$ be an order of $\AA$ containing $\ll_{a,b}$.
  Then, given any rational prime $p$, there exists
  $\gamma \in \ll_{a,b}$ which is primitive in $\hh$, and such that
  $p \mid \N(\gamma)$. Moreover, if $p$ does not divide $ab$, then we
  may take $\gamma = 1 + ri + sj$, for some $r,s \in \ZZ $ with
  $0 \leq r,s < p$.
\end{lemma}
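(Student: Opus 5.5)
The plan is to treat the case $p\nmid ab$ first, since it carries the explicit form of $\gamma$, and then dispose of the primes $p\mid ab$ separately.

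\emph{Existence of $r,s$ when $p\nmid ab$.} I want $1+ar^2+bs^2\equiv 0 \pmod p$. For $p$ odd, the sets $\{ar^2 \bmod p\}$ and $\{-1-bs^2 \bmod p\}$ with $0\le r,s<p$ each have $(p+1)/2$ elements. By pigeonhole they intersect, which gives $r,s$ in the required range. For $p=2$ with $2\nmid ab$, both $a$ and $b$ are odd, so $r=1$, $s=0$ works, since $1+a$ is even.

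\emph{Primitivity of $\gamma=1+ri+sj$.} Suppose a rational prime $q$ divides $\gamma$ in $\hh$. As recalled before the lemma, $q\mid\Tr(\gamma)=2$, so $q=2$ is the only possible culprit. Moreover $4\mid\N(\gamma)=1+ar^2+bs^2$. It therefore suffices to choose $(r,s)$ so that $1+ar^2+bs^2\not\equiv 0\pmod 4$. For odd $p$ I use the freedom of replacing $r$ by $p-r$ (when $r\neq 0$) and $s$ by $p-s$ (when $s\ne 0$). These keep the congruence mod $p$ and the range $0\le r,s<p$, and each such replacement flips the parity of the corresponding coordinate. Aiming for $r,s$ both even gives $\N(\gamma)\equiv 1\pmod 4$, which would finish the argument.

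The degenerate cases are where one coordinate is $0$ or cannot be flipped. There I check directly the few residues $1$, $1+a$, $1+b$, $1+a+b \pmod 4$, or $1+ar^2$ with $r$ of either parity. This uses that $a$ and $b$ are square-free, so that, for example, $a\not\equiv 0\pmod 4$. I expect this parity bookkeeping at the prime $2$ to be the main obstacle. If it gets messy, a fallback is to note that $\hh\subseteq\ll_{a,b}^{\#}$, the dual lattice under the trace form, which is spanned by $\tfrac12,\tfrac{i}{2a},\tfrac{j}{2b},\tfrac{k}{2ab}$. Then $\gamma/2\in\hh$ can be tested against the products $(\gamma/2)i$, $(\gamma/2)j$ and $(\gamma/2)k$ also lying in that dual lattice.

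\emph{Primes $p\mid ab$.} If $p\mid a$, take $\gamma=i$; if $p\mid b$, take $\gamma=j$. Then $\N(\gamma)=a$ or $b$ is divisible by $p$. As observed just before the lemma, $i$ and $j$ are primitive in any order containing $\ll_{a,b}$, because their norms are square-free. This completes the plan for all primes $p$.
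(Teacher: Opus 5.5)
\textbf{Odd primes and $p\mid ab$.} Your treatment of odd $p\nmid ab$ is sound, and in fact sharper than the paper's. Since $p$ is odd, replacing $r$ by $p-r$ and $s$ by $p-s$ where needed lets you take $r,s$ both even. There is no degenerate case here, because $0$ is already even. Then $\N(\gamma)$ is odd, so $\gamma=1+ri+sj$ is itself primitive. The paper instead passes to $\gamma/2$ when $\gamma$ is not primitive, and $\gamma/2\notin\ll_{a,b}$. Your handling of primes $p\mid ab$ is the same as the paper's.

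\textbf{The gap: $p=2$ with $a,b$ odd.} You fix $\gamma=1+i$ and certify primitivity by $\N(\gamma)\not\equiv 0\pmod 4$, with residue checks to switch to $1+j$ if needed. This breaks down when $a\equiv b\equiv 3\pmod 4$. The only candidates with $0\le r,s<2$ and even norm are $1+i$ and $1+j$, and both then have norm $\equiv 0\pmod 4$. So no check of norms mod $4$ can decide primitivity.

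Your dual-lattice fallback cannot decide it either, because the answer genuinely depends on $\hh$. For $a=b=3$, the lattice $\hh=\ZZ[\tfrac{1+i}{2}]+\ZZ[\tfrac{1+i}{2}]\,j$ is an order containing $\ll_{3,3}$. In it, $1+i=2\cdot\tfrac{1+i}{2}$ is not primitive. Consequently $\tfrac{1+i}{2}$ and its products with $i,j,k$ necessarily pass any test of membership in $\ll_{a,b}^{\#}$.

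\textbf{The missing idea.} Use the two candidates jointly. If both $\tfrac{1+i}{2}$ and $\tfrac{1+j}{2}$ lay in $\hh$, then so would their product $\tfrac{1+i+j+k}{4}$. Its trace is $\tfrac12$, which is not an integer, so this is impossible. Hence at least one of $1+i$, $1+j$ is primitive in $\hh$, and which one depends on $\hh$. This is exactly how the paper handles $p=2$.
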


\begin{proof}
  If $p \mid ab$, take $\gamma = i$ if $p\mid a$, and take
  $\gamma = j$ if $p\mid b$. For $p = 2$, if either $a$ or $b$ is
  even, we are done.  Now, if $a$ and $b$ are both odd, then
  $\N(1+i)=1+a$ and $\N(1+j) = 1+b$ are both even. As
  $\Tr(1+i)=\Tr(1+j)=2$, the only way for both $1+i$ and $1+j$ not to
  be primitive is if $\frac{1+i}2, \frac{1+j}2\in \hh$. But then
  $\frac{1+i}2 \frac{1+j}2 = \frac{1+i+j+k}4$ would be in $\hh$, which
  is not true, as its trace is $\frac12$. Hence, either $\gamma = 1+i$
  or $\gamma = 1+j$ is a primitive element with even norm.

  Let, then, $p$ be an odd prime that does not divide $ab$. Then the
  sets
  \[ \left \{ 1 + ar^2 \colon r \in \FF_p \right\}, \; \left \{ -bs^2
      \colon s \in \FF_p \right\}\]
  each have $\frac{p+1}{2}$ distinct elements of $\FF_p$. It follows
  that their intersection is not empty. Hence, there will exist
  $r,s \in \FF_p$ such that
  \[ 1 + ar^2 \equiv -b s^2 \pmod{p}.\]
  Taking $\gamma = 1+ri+sj$, we get that $p \mid \N(\gamma)$. As
  $\Tr(\gamma)= 2$, either $\gamma$ is primitive, or
  $\frac{\gamma}2\in \hh$ is primitive and its norm is still divisible
  by $p$.
\end{proof}

\begin{theorem}\label{PID_form_universal}
  Let $\hh$ be a PID order in $\AA$ containing $\ll_{a,b}$. Then the
  norm form associated with $\hh$ is universal.
\end{theorem}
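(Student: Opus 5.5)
Since the norm is multiplicative, $\N(\alpha\beta)=\N(\alpha)\N(\beta)$, and $\N(1)=1$, it suffices to show that every rational prime $p$ is the norm of some element of $\hh$. Indeed, if every prime is a norm, then any $n=p_1p_2\cdots p_r$ is the norm of the product of the corresponding elements. Also, the norm form associated with $\hh$ is by definition the map $\N$ restricted to $\hh$, written in a $\ZZ$-basis of $\hh$. So universality of the form amounts to $\N(\hh)\supseteq\NN$.

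Fix a prime $p$. By the preceding Lemma there is $\gamma\in\ll_{a,b}\subseteq\hh$ which is primitive in $\hh$ and satisfies $p\mid \N(\gamma)$. Note that $\N(\gamma)\neq 0$, since the algebra is definite and $\gamma\neq0$ (a primitive element is nonzero). Write $\N(\gamma)=p\,m$ with $m\in\NN$, and order the prime factorization so that $p$ comes first: $\N(\gamma)=p\cdot q_2\cdots q_r$. Since $\hh$ is a PID and $\gamma$ is primitive, the factorization result quoted in Section~\ref{Sec:general_set} (from \cite[Theorem 11.4.8]{Voight} and \cite[Theorem 2]{OrdersPID}) gives $\gamma=\pi_1\pi_2\cdots\pi_r$ with $\pi_1$ a prime of $\hh$ above $p$. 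Then $\N(\pi_1)=p$, so $p$ is represented.

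The only delicate point is invoking the factorization theorem correctly: it requires $\gamma$ to be primitive, which is exactly what the Lemma supplies. That is why the Lemma insists on primitivity rather than on merely producing an element with norm divisible by $p$; otherwise $\gamma$ could be $p\cdot\delta$, and the factorization argument would fail. The case $n=1$ is covered by $\N(1)=1$.
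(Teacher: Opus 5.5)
Your argument is correct and is the paper's proof: reduce to primes by multiplicativity, take the primitive $\gamma\in\ll_{a,b}\subseteq\hh$ supplied by the Lemma, and use that $\hh$ is a PID to extract a divisor of $\gamma$ of norm $p$. The only cosmetic difference is that you obtain a left divisor by putting $p$ first in the modelled factorization, whereas the paper quotes \cite[Thm.~1]{OrdersPID} directly to get a right divisor of norm $p$.
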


\begin{proof}
  Since the norm is multiplicative, we only need to show that there
  exists $\gamma \in \hh$ with norm equal to $p$, for all primes $p$.
  By the previous lemma, there is a primitive
  $\gamma \in \ll_{a,b} \subseteq \hh$ with norm divisible by
  $p$. Since $\hh$ is a PID, by \cite[Thm.~1]{OrdersPID} $\gamma$ has
  a right divisor with norm $p$.
\end{proof} 

In the next sections we will show how to use maximal orders
$\hh_{a,b}$ containing $\ll_{a,b}$, for $(a,b)=(2,5)$ and
$(a,b)=(1,7)$, that happen to be PIDs, and hence their norm forms are
universal, to determine the positive integers represented by the norm
form of $\ll_{a,b}$. It is in this context that we will use what we
call \textit{metaconjugation}. This consists of taking, for
$\gamma \in \hh_{a,b}$, appropriate quaternions
$\sigma, \tau \in \hh_{a,b}$ with the same norm, such that
$\sigma \gamma \tau^{-1}$, which still has the same norm as $\gamma$,
lands in $\ll_{a,b}$. That is, in order to prove that $n \in \NN$ is
represented by the norm form of $\ll_{a,b}$, it is enough to find, for
$\gamma \in \hh_{a,b}$ with norm $n$, quaternions
$\sigma, \tau \in \hh_{a,b}$ with $\N(\sigma) = \N(\tau)$, and such
that $\sigma \gamma \tau^{-1} \in \ll_{a,b}$. As the norm is
multiplicative, we may consider only $\gamma$ of prime norm.

Observe that if $\hh_{a,b} = \ZZ[v_0,v_1,v_2,v_3]$, and
$\sigma \gamma \bar{\tau} = t v_0 + x v_1 + y v_2 + z v_3$, then one
has that $\sigma \gamma \tau^{-1} \in \hh_{a,b}$ if and only if
$\N(\tau)$ divides $t,x,y,z$, since
\[ \sigma \gamma \tau^{-1} = \frac{1}{\N(\tau)} \sigma \gamma
  \bar{\tau}.\] Also, note that we may consider $\sigma, \tau$ up to
left associates in $\ll_{a,b}$, since
\[\sigma \gamma \tau^{-1} \in \ll_{a,b} \iff (u\sigma) \gamma
  \tau^{-1} \in \ll_{a,b} \iff \sigma \gamma (v\tau)^{-1} \in
  \ll_{a,b}, \]
for all $u,v \in \ll_{a,b}^*$.
  
We treat each of the forms 1-2-5-10 and 1-1-7-7 separately.

\section{The form 1-2-5-10}\label{Sec:1-2-5-10}

The form $t^2+2x^2+5y^2+10z^2$, when considered over $\QQ$, is the
norm form of the quaternion algebra $\left (\frac{-2,-5}{\QQ}\right)$,
and the problem of finding if it represents a given $n\in\NN$ is the
same as finding if there is an element of norm $n$ in the order of
Lipschitz quaternions $\ll_{2,5}:=\ZZ[1,i,j,k]$. To deal with this
question using the approach sketched in the previous section, we use
the maximal order $\hh_{2,5} := \ZZ [v_0, v_1, v_2, v_3]$, where
\begin{alignat*}{3}
	& v_0 = 1,  & v_1= \frac{2+i-k}{4},  \\
	& v_2 = \frac{2+3i+k}{4}, \quad  & v_3 = \frac{1+i+j}{2}.
\end{alignat*}
This order contains $\ll_{2,5}$ and is Euclidean for the norm (see
\cite[Theorem 4]{Fitzgerald}, and thus a PID. Note that the norm form
for this order is
\begin{multline*}
  \N( a + b v_1 + c v_2 + d v_3 ) = a^2+b^2+2c^2+2d^2+ab+ac+bd+2cd = \\
  =\left( a + \frac{b+c+d}{2} \right)^2 + 2\left( \frac{b+3c+2d}{4}
  \right)^2 + 5\left( \frac{d}{2} \right)^2 + 10\left( \frac{c-b}{4}
  \right)^2.
\end{multline*}
By Theorem~\ref{PID_form_universal}, this norm form is universal.  The
following lemma describes which elements of $\hh_{2,5}$ are in
$\ll_{2,5}$ in terms of their coordinates in the basis given above.

\begin{lemma}\label{L5_prop}
  A quaternion $\alpha = a + b v_1 + c v_2 + d v_3$ of $\hh_{2,5}$ is
  in $\ll_{2,5}$ exactly when $d$ is even and $b \equiv c \pmod{4}$.
\end{lemma}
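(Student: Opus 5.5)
Write $\alpha = a + b v_1 + c v_2 + d v_3$ in the standard basis $1,i,j,k$. Substituting the definitions of $v_1,v_2,v_3$ gives
\[
\alpha = \Bigl(a + \tfrac{2b+2c+2d}{4}\Bigr) + \tfrac{b+3c+2d}{4}\, i + \tfrac{d}{2}\, j + \tfrac{c-b}{4}\, k ,
\]
which is consistent with the expression of the norm form displayed above. By definition, $\alpha \in \ll_{2,5}$ exactly when all four of these coefficients are integers. So the proof is a direct check of these four integrality conditions.

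First, the $j$-coefficient $d/2$ is an integer if and only if $d$ is even. Second, the $k$-coefficient $(c-b)/4$ is an integer if and only if $b \equiv c \pmod 4$. These two conditions are therefore necessary.

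For sufficiency, assume $d = 2d'$ and $c = b + 4m$. The $i$-coefficient becomes
\[
\frac{b + 3b + 12m + 4d'}{4} = b + 3m + d' \in \ZZ .
\]
The scalar coefficient becomes
\[
a + \frac{b + c + d}{2} = a + \frac{2b + 4m + 2d'}{2} = a + b + 2m + d' \in \ZZ .
\]
Hence all four coefficients are integers and $\alpha \in \ll_{2,5}$.

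There is no real obstacle here. The only care needed is in the expansion in the standard basis; one should double-check it against the norm formula given earlier.
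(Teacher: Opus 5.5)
Your proof is correct and is essentially the paper's argument: expand $\alpha$ in the basis $1,i,j,k$ and check integrality of the coordinates. The paper merely asserts that integrality of the $j$- and $k$-coordinates forces integrality of the other two, while you verify this explicitly by substituting $d=2d'$ and $c=b+4m$.
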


\begin{proof}
  When expressed in the coordinates $1,i,j,k$, one has
  \[\alpha = \left(a + \frac{b+c}2+\frac{d}2\right) + 
    \left(\frac{b+3c}4+\frac{d}2 \right) i +\left(\frac{d}2\right)j +
    \left(\frac{c-b}4\right) k,\] and it is very easy to see that
  $\alpha\in\ll_{a,b}$ if and only if the last two of these
  coordinates are integers.
\end{proof}

The unit groups of both $\hh_{2,5}$ and $\ll_{2,5}$ are as follows.

\begin{lemma}
  $\ll_{2,5}^* =\{\pm 1 \}$ and
  $\hh_{2,5}^* = \{\pm 1, \pm v_1, \pm (1-v_1)\}$.
\end{lemma}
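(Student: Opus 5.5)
Units are exactly the elements of norm $1$, so both parts reduce to finding the norm-one vectors of the relevant positive definite forms.

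For $\ll_{2,5}$, an element $t+xi+yj+zk$ has norm $t^2+2x^2+5y^2+10z^2$. This equals $1$ only if $x=y=z=0$ and $t=\pm1$, which gives $\ll_{2,5}^*=\{\pm1\}$.

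For $\hh_{2,5}$, I will use the norm form computed above,
\[ \N(a+bv_1+cv_2+dv_3)=\left( a + \tfrac{b+c+d}{2} \right)^2 + 2\left( \tfrac{b+3c+2d}{4} \right)^2 + 5\left( \tfrac{d}{2} \right)^2 + 10\left( \tfrac{c-b}{4} \right)^2 . \]
Suppose this equals $1$.

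\begin{itemize}
\item The term $5(d/2)^2\le 1$ with $d\in\ZZ$ forces $d=0$, since $d=\pm1$ already gives $5/4>1$.
\item With $d=0$, the term $10\left(\frac{c-b}{4}\right)^2\le1$ forces $|c-b|\le1$.
\end{itemize}

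I then split into cases on $c-b\in\{0,\pm1\}$.

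\begin{itemize}
\item If $c=b\pm1$, the last term is $10/16$. The middle term is $2\left(\frac{4b\pm3}{4}\right)^2\ge 2\cdot\frac{1}{16}$. The first term is $\left(a+b\pm\frac12\right)^2\ge\frac14$. The total is at least $\frac{10}{16}+\frac{2}{16}+\frac{4}{16}=1$. Equality needs $4b\pm3=\pm1$ (so $b=-1$ with the $+$ sign, or $b=1$ with the $-$ sign) and $a+b\pm\frac12=\pm\frac12$. Solving these finitely many equations will give the remaining units $\pm v_1$ and $\pm(1-v_1)$, in whatever coordinates they occur.
\item If $c=b$, the norm is $(a+b)^2+2b^2$. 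This equals $1$ only for $b=0$, $a=\pm1$, which gives $\pm1$.
\end{itemize}

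Finally, I will check directly that $v_1$ and $1-v_1$ lie in $\hh_{2,5}$ and have norm $1$. Using the form with $(a,b,c,d)=(0,1,0,0)$ and $(1,-1,0,0)$, each gives $a^2+b^2+ab=1$, which confirms the list.

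The only mildly delicate step is the $c=b\pm1$ case. There one must make sure the equality analysis is exhaustive and matches exactly the four elements $\pm v_1,\pm(1-v_1)$. An equivalent cross-check is to find the norm-one vectors of the integral form $a^2+b^2+2c^2+2d^2+ab+ac+bd+2cd$ directly. Completing squares shows $c=d=0$ is needed, which leaves $a^2+ab+b^2=1$, whose six solutions give precisely the six stated units.
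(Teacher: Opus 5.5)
Your main argument is correct. The paper states this lemma without proof, and finding the norm-one vectors of the positive definite norm form is the natural computation. All your bounds from the sum-of-squares expression for $\N$ on $\hh_{2,5}$ are valid.

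The equations you leave unsolved close at once:
\begin{itemize}
\item $c=b+1$ forces $b=-1$, $c=0$, $a\in\{0,1\}$, giving $-v_1$ and $1-v_1$;
\item $c=b-1$ forces $b=1$, $c=0$, $a\in\{-1,0\}$, giving $v_1-1=-(1-v_1)$ and $v_1$.
\end{itemize}
Together with $\pm 1$ from the case $c=b$, this is exactly the stated list.

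The ``cross-check'' in your last paragraph, however, is false as stated. The integral form $a^2+b^2+2c^2+2d^2+ab+ac+bd+2cd$ that you took from the paper is a misprint. Expanding the sum-of-squares expression you actually used gives $a^2+b^2+2c^2+2d^2+ab+ac+ad+bd+2cd$, which is also what the comment in the paper's PARI code for \textbf{repH5} says.

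For the form without the $ad$ term, norm one does not force $c=d=0$. For example, $(a,b,c,d)=(-1,1,1,-1)$ gives $1+1+2+2-1-1-1-2=1$. With the correct form this vector has value $2$: indeed $-1+v_1+v_2-v_3=-\tfrac12+\tfrac12 i-\tfrac12 j$ has norm $\tfrac14+\tfrac24+\tfrac54=2$. For the correct form, the conclusion $c=d=0$ does hold, but only as a consequence of your case analysis.

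So either drop that paragraph or restate it with the $ad$ term included. The proof itself does not depend on it.
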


In the next proposition, we will show that our metaconjugation process
succeeds in finding a quaternion $\ll_{2,5}$ with norm $n$, for all
$n \in \NN$. As described before, it is enough to show that, for any
$\gamma \in \hh_{2,5}$, there are $\sigma, \tau \in \hh_{2,5}$ of
equal norm such that $\sigma \gamma \tau^{-1} \in \ll_{2,5}$. It turns
out that it is enough to consider quaternions of norm $4$.

\begin{prop}\label{H5_to_L5}
  For any $\gamma \in \hh_{2,5}$ there are
  $\sigma, \tau \in \hh_{2,5}$, with $\N(\sigma) = \N(\tau) = 4$, such
  that $\sigma \gamma \tau^{-1} \in \ll_{2,5}$ .
\end{prop}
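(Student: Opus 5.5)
We reduce the statement to a finite check modulo $4\hh_{2,5}$. The key observation is that membership of $\sigma\gamma\tau^{-1} = \frac14\sigma\gamma\bar\tau$ in $\ll_{2,5}$ depends only on $\gamma$ modulo $4\hh_{2,5}$. Indeed, if $\gamma' = \gamma + 4\delta$ with $\delta\in\hh_{2,5}$, then
\[
\sigma\gamma'\tau^{-1} = \sigma\gamma\tau^{-1} + \sigma\delta\bar\tau .
\]
The added term lies in $\hh_{2,5}$, but it need not lie in $\ll_{2,5}$, so reducing modulo $4\hh_{2,5}$ alone is not enough. We therefore work modulo $16\hh_{2,5}$. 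By Lemma~\ref{L5_prop}, membership of $\frac14\sigma\gamma\bar\tau$ in $\ll_{2,5}$ is the condition that, writing $\sigma\gamma\bar\tau = t+xv_1+yv_2+zv_3$, we have $4\mid t,x,y,z$, $z/4$ even, and $x/4\equiv y/4 \pmod 4$. These conditions depend only on $\sigma\gamma\bar\tau$ modulo $16\hh_{2,5}$, hence only on $\gamma$ modulo $16\hh_{2,5}$. So only finitely many classes of $\gamma$ need to be checked.

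We then reduce the number of cases. By the remark at the end of Section~\ref{Sec:general_set}, $\sigma$ and $\tau$ may be taken up to left associates in $\ll_{2,5}$, that is, up to sign. Multiplying $\gamma$ on either side by the units $\pm1,\pm v_1,\pm(1-v_1)$ of $\hh_{2,5}$ only replaces $\sigma,\tau$ by $\sigma u,\tau u'$, which still have norm $4$. So it suffices to treat $\gamma$ modulo $16$ up to two-sided multiplication by $\hh_{2,5}^*$.

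Next we list the elements of norm $4$ in $\hh_{2,5}$ explicitly, using the norm form $a^2+b^2+2c^2+2d^2+ab+ac+bd+2cd$. They include $\pm2$, $\pm2v_1$, $\pm 2(1-v_1)$, together with the genuinely non-scalar ones. For example $\pm i\pm\ldots$ does not work, since $\N(i)=2$, but products such as $v_3\bar v_3$ or elements like $1+i$ (of norm $3$, so not this one) should be replaced by a direct search over small coordinates. We expect a modest list, with $12$ or $18$ elements up to sign, coming from the primes of norm $2$ multiplied pairwise. Indeed $\N(\pi)=2$ occurs for $\pi=i$ and its associates, and every norm-$4$ element that is primitive is a product $\pi\pi'$ of two primes above $2$.

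Finally comes the core step, and we expect it to be the main obstacle: showing that for every class of $\gamma$ some pair $(\sigma,\tau)$ from this list works. First, if $\gamma\in\ll_{2,5}$ we take $\sigma=\tau=2$. Otherwise we split by the obstruction in Lemma~\ref{L5_prop}: either $d$ is odd, or $d$ is even but $b\not\equiv c \pmod 4$. We handle this by expanding $\sigma\gamma\bar\tau$ in coordinates, so that the four coefficients become integer linear forms in $(a,b,c,d)$, and then checking the congruence conditions modulo $16$. A natural first attempt is $\tau=\bar\sigma$-type choices, or $\sigma=\tau=\pi\pi'$ with $\pi$ above $2$, so that $\sigma\gamma\tau^{-1}$ is an honest conjugation, which preserves the trace and only moves the pure part. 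If conjugation alone fails for some classes, we will use $\sigma\neq\tau$. To keep the computation finite and reliable, we would do it by computer or by a careful case table over $(b \bmod 4, c\bmod 4, d\bmod 2)$, noting that $a$ drops out modulo the relevant conditions when $\sigma,\tau$ are chosen with norm $4$. The difficulty is ensuring that no residue class is left uncovered; the reduction to finitely many classes guarantees that this check terminates.
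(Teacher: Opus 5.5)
Your reduction is the paper's. The conditions $4\mid t,x,y$, $8\mid z$, $x\equiv y \pmod{16}$ on $\sigma\gamma\bar\tau=t+xv_1+yv_2+zv_3$ depend only on $\gamma$ modulo $16\hh_{2,5}$, and $\sigma,\tau$ may be taken up to sign. Your extra reduction by the two-sided action of $\hh_{2,5}^*$ is also valid. Beyond the trivial case $\gamma\in\ll_{2,5}$, however, that is all the proposal proves. The content of the proposition is that the subgroups $S_{\sigma,\tau}=\{\gamma\in\hh_{2,5} : \sigma\gamma\tau^{-1}\in\ll_{2,5}\}\supseteq 16\hh_{2,5}$, taken over all pairs of norm-$4$ elements, cover $\hh_{2,5}$. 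You only say you ``expect'' this, and the fact that the check terminates says nothing about its outcome. The paper settles exactly this step by an exhaustive computer verification over all $16^4$ residues and all pairs from the full list of norm-$4$ elements up to sign. There are $42$ such elements: $6$ of the form $2u$ with $u\in\hh_{2,5}^*$ and $36$ primitive ones, so $21$ up to sign. Without carrying out or citing such a verification, your argument stops short of the statement.

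The hand-computation shortcut you suggest also cannot work. For $\gamma=a+bv_1+cv_2+dv_3$ one has $\sigma(\gamma+1)\tau^{-1}-\sigma\gamma\tau^{-1}=\sigma\tau^{-1}$. This element has norm $1$, so it lies in $\ll_{2,5}$ only if $\sigma=\pm\tau$; thus $a$ ``drops out'' only for signed conjugations. Signed conjugations never work when $\Tr(\gamma)=2a+b+c+d$ is odd, because $\Tr(\pm\sigma\gamma\sigma^{-1})=\pm\Tr(\gamma)$ while every element of $\ll_{2,5}$ has even trace. For example, $\gamma=v_1$ forces $\tau=\pm\sigma v_1$. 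In your table over $(b \bmod 4, c\bmod 4, d\bmod 2)$, $\gamma$ and $\gamma+1$ always lie in the same class, and the parity of $b+c+d$ is constant on each class. Hence for the $16$ odd-trace classes no single pair serves the whole class, and checking one representative per class proves nothing about the others. The verification has to run over residues modulo $16\hh_{2,5}$ (optionally reduced by your unit action), as in the paper's \textbf{testMod16}. Also, the norm form you quote lacks the term $ad$, so an enumeration based on it would produce wrong norm-$4$ elements. From the coordinates in Lemma~\ref{L5_prop} the correct form is $a^2+b^2+2c^2+2d^2+ab+ac+ad+bd+2cd$, which is what the paper's code actually uses.
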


\begin{proof}
  By the general discussion in Section~\ref{Sec:general_set}, if
  $\N(\sigma) = \N(\tau) = 4$ and
  $\sigma \gamma \bar{\tau} = t + xv_1 + y v_2 + z v_3$, then one has
  that $\sigma \gamma \tau^{-1} \in \hh_{2,5}$ if and only if $4$
  divides $t,x,y,z$.  Additionally,
  $\sigma \gamma \tau^{-1}\in\ll_{2,5}$ if and only $2\mid \frac{z}4$
  and $\frac{x}4\equiv \frac{y}4\pmod{4}$. Thus,
  \[
  \sigma \gamma \tau^{-1} \in \ll_{2,5} \iff \begin{cases}
  	t, x, y \equiv 0 \pmod{4}, \\
  	z \equiv 0\pmod{8}, \\
  	x \equiv y \pmod{16}.
  \end{cases}
\]

It follows, in particular, that if the coefficients
$\sigma \gamma \bar{\tau}$ are divisible by $16$, then
$\sigma \gamma \tau^{-1} \in \ll_{2,5}$. Thus, if we write
$\gamma = 16 \beta + \Tilde{\gamma}$, for some
$\beta, \Tilde{\gamma} \in \hh_{2,5}$ such that the coefficients of
$\Tilde{\gamma}$ are in $\{0,1, \dots, 15\}$, we have that
\[ \sigma \gamma \tau^{-1} = 4 \sigma \beta \bar{\tau} + \sigma
  \Tilde{\gamma} \tau^{-1}, \] from which follows that
$\sigma \gamma \tau^{-1} \in \ll_{2,5} \iff \sigma \Tilde{\gamma}
\tau^{-1} \in \ll_{2,5}$, as $4\sigma \beta \bar{\tau} \in
\ll_{2,5}$. Of course, we can consider $\sigma, \tau$ only up to sign.
  
Hence, we are reduced to check that, for each
$\Tilde{\gamma} \in \hh_{2,5}$ with coefficients in
$\{0,1, \dots, 15\}$, there always exist quaternions
$\sigma, \tau \in \hh_{2,5}$ of norm 4 such that
$\sigma \Tilde{\gamma}\tau^{-1} \in \ll_{2,5}$. Using the PARI/GP
\cite{PARI} code presented in the next section, we have verified
computationally our claim. The code was run in a laptop of 16GB of
RAM, and it took around 2 seconds to check all possibilities for
$\Tilde{\gamma}$.
\end{proof}

The previous result yields that, if there exists
$\gamma \in \hh_{2,5}$ of norm $n$, then it also exists
$\alpha \in \ll_{2,5}$ of norm $n$. Since the norm form of $\hh_{2,5}$
is universal, we get our first main result:

\begin{MainTheorem1}
  The form \emph{1-2-5-10} is universal.
\end{MainTheorem1}

\section{The form 1-1-7-7}\label{Sec:1-1-7-7}

We now consider the form $t^2+x^2+7y^2+7z^2$, which is the norm form
of the order $\ll_{1,7}:=\ZZ[1,i,j,k]$. In this case we use the
maximal order
\[ \hh_{1,7} := \ZZ \left [1, i, \omega, i \omega \right],\]
where $\omega=\frac{1}{2}(1+j)$. It is well known that this maximal
order is a PID, but not Euclidean for the norm (see
\cite[p.~95]{Brzezinski} and \cite[p.~155]{Vigneras}). The norm form
of $\hh_{1,7}$ is
\[ \N( a + b i + c\omega + d i \omega )= a^2 + b^2 +2c^2 + 2d^2 + ac +
  bd,\]
and a simple calculation yields the following lemma.

\begin{lemma}
  The units of either $\ll_{1,7}$ or $\hh_{1,7}$ are $ \pm 1, \pm i$.
\end{lemma}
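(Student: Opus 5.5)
The statement claims that the units of both $\ll_{1,7}$ and $\hh_{1,7}$ are $\pm1,\pm i$. I would use that a quaternion in an order is a unit exactly when its norm is $1$: if $\alpha\bar\alpha=1$ then $\alpha^{-1}=\bar\alpha$, and $\bar\alpha=\Tr(\alpha)-\alpha$ lies in the order. Conversely, $\N$ is multiplicative and integer-valued on the order, so a unit has norm $1$. The problem therefore reduces to finding all elements of norm $1$ in each order. Since $\ll_{1,7}\subseteq\hh_{1,7}$, and $\pm1,\pm i$ visibly lie in $\ll_{1,7}$ with norm $1$, it is enough to show that $\hh_{1,7}$ has no further elements of norm $1$.

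For $\hh_{1,7}$ I would complete squares in the given norm form:
\[ a^2+b^2+2c^2+2d^2+ac+bd=\left(a+\tfrac{c}{2}\right)^2+\left(b+\tfrac{d}{2}\right)^2+\tfrac{7}{4}c^2+\tfrac{7}{4}d^2 .\]
If this equals $1$, then $\tfrac74(c^2+d^2)\le 1$, which forces $c=d=0$. The equation then becomes $a^2+b^2=1$, so $(a,b)\in\{(\pm1,0),(0,\pm1)\}$, giving exactly the elements $\pm1,\pm i$.

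For $\ll_{1,7}$ the same conclusion follows either from the inclusion or directly: $t^2+x^2+7y^2+7z^2=1$ forces $y=z=0$ and $t^2+x^2=1$. None of these steps is a real obstacle; the only point needing a line of justification is the equivalence between being a unit and having norm $1$, which I would state explicitly at the start.
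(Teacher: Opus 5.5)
Your proposal is correct and is essentially the paper's argument: the paper gives no details beyond calling this ``a simple calculation'', and your proof carries out that calculation. You identify units with norm-$1$ elements, complete squares to get $\tfrac74(c^2+d^2)\le 1$, and deduce $c=d=0$ and $a^2+b^2=1$; the $\ll_{1,7}$ case then follows from the inclusion.
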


An arbitrary element $a + b i + c\omega + d i \omega$ of $\hh_{1,7}$
can be written in the canonical basis as
$ \left( a+\frac{c}{2} \right) + \left(b+\frac{d}{2} \right) i +
\frac{c}{2}j + \frac{d}{2}k$. Thus, it is clear that the order
$\ll_{1,7}$ is strictly contained in $\hh_{1,7}$, and that the
following holds.

\begin{lemma}\label{L7_prop} A quaternion
  $\alpha = a + bi + c \omega + d i \omega \in \hh_{1,7}$ is in
  $\ll_{1,7}$ if and only if $c\equiv d \equiv 0 \pmod{2}$.
\end{lemma}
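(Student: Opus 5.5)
The plan is to convert from the basis $1,i,\omega,i\omega$ of $\hh_{1,7}$ to the canonical basis $1,i,j,k$ of $\AA$, and then read off integrality of the coordinates. The ingredients are $\omega=\frac12(1+j)$, the multiplication rule $ij=k$, and the fact that $\ll_{1,7}=\ZZ[1,i,j,k]$ consists exactly of the elements whose four canonical coordinates are integers, since $1,i,j,k$ is a $\QQ$-basis of $\AA$.

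First we compute $i\omega=\frac12(i+ij)=\frac12(i+k)$. Then for $\alpha=a+bi+c\omega+di\omega$ with $a,b,c,d\in\ZZ$ we get
\[
\alpha=\left(a+\frac c2\right)+\left(b+\frac d2\right)i+\frac c2\,j+\frac d2\,k ,
\]
which is the expression already recorded just before the statement. Because $1,i,j,k$ are linearly independent over $\QQ$, these four rational numbers are the unique coordinates of $\alpha$. Hence $\alpha\in\ll_{1,7}$ if and only if all four of them lie in $\ZZ$.

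For the forward direction, suppose $\alpha\in\ll_{1,7}$. The coefficients of $j$ and $k$ give $\frac c2,\frac d2\in\ZZ$, so $c\equiv d\equiv 0\pmod 2$. For the converse, suppose $c$ and $d$ are even. Then $\frac c2,\frac d2\in\ZZ$, and consequently $a+\frac c2$ and $b+\frac d2$ are integers as well, so $\alpha\in\ll_{1,7}$.

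No step is a real obstacle; the argument is a direct verification. The only points needing care are computing $i\omega$ correctly from $ij=k$, and invoking the uniqueness of coordinates in the basis $1,i,j,k$.
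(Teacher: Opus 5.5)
Your proof is correct and follows the paper's own argument: the paper likewise rewrites $\alpha$ in the basis $1,i,j,k$ as $\left(a+\frac{c}{2}\right)+\left(b+\frac{d}{2}\right)i+\frac{c}{2}j+\frac{d}{2}k$ and reads off that membership in $\ll_{1,7}$ is equivalent to $c$ and $d$ being even. Your explicit computation $i\omega=\frac12(i+k)$ and your appeal to the uniqueness of coordinates fill in the details that the paper calls ``clear''.
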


From this one can deduce the following result.

\begin{prop}\label{diff_parity}
  If
  $\alpha = a + bi + c \omega + d i \omega \in \hh_{1,7} \setminus
  \ll_{1,7}$ and has odd norm, then $c\not \equiv d\pmod{2}$.
\end{prop}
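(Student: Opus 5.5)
The plan is to work directly with the norm form of $\hh_{1,7}$ and reduce it modulo $2$. For $\alpha = a + bi + c\omega + d i\omega$ we have
\[ \N(\alpha) = a^2 + b^2 + 2c^2 + 2d^2 + ac + bd, \]
so modulo $2$ this is
\[ \N(\alpha) \equiv a^2 + b^2 + ac + bd \equiv a(a+c) + b(b+d) \pmod 2. \]

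By Lemma~\ref{L7_prop}, $\alpha \notin \ll_{1,7}$ means that $c$ and $d$ are not both even. So it suffices to rule out the remaining case where $c$ and $d$ are both odd. Assume $c \equiv d \equiv 1 \pmod 2$. Then
\[ a(a+c) \equiv a(a+1) \equiv 0 \pmod 2, \qquad b(b+d) \equiv b(b+1) \equiv 0 \pmod 2, \]
since each is a product of two consecutive integers. Hence $\N(\alpha)$ is even, which contradicts the hypothesis that the norm is odd.

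Therefore exactly one of $c, d$ is odd, that is, $c \not\equiv d \pmod 2$. There is no real obstacle here: the only points needing care are the reduction of the norm form modulo $2$ and the observation that a product of consecutive integers is even.
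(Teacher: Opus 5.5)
Your proposal is correct and follows essentially the same route as the paper: you use Lemma~\ref{L7_prop} to exclude $c,d$ both even, and reduce the norm modulo $2$ to show that $c,d$ both odd forces $\N(\alpha)\equiv a^2+b^2+a+b\equiv 0 \pmod 2$. Writing this as $a(a+1)+b(b+1)$ only makes explicit the parity fact the paper uses implicitly.
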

\begin{proof}
  Let
  $\alpha = a + bi + c \omega + d i \omega \in \hh_{1,7} \setminus
  \ll_{1,7} $ with $\N(\alpha)\equiv 1\pmod{2}$. By the previous
  Lemma, we have that $c$ and $d$ are not both even. Suppose that they
  are both odd. It follows that
  \[
    \N(\alpha) = a^2 + b^2 + 2c^2 + 2d^2 +ac+bd \equiv a^2 + b^2 + a +
    b \equiv 0 \pmod{2},
  \]
  which contradicts the hypothesis.
\end{proof}

\begin{remark}\label{d_odd}
  Suppose one has
  $\alpha = a + bi + c \omega + d i \omega\in \hh_{1,7} \setminus
  \ll_{1,7}$ of a given norm. As
  $i \alpha = -b + ai - d \omega + c i \omega$, by replacing $\alpha$
  by $i\alpha$ if necessary, we may assume that $\alpha$ is such that
  $c$ is odd and $d$ is even.
\end{remark}

In order to describe exactly which natural numbers are represented by
the norm form of the order $\ll_{1,7}$, we start by observing the
following.
\begin{lemma}\label{7n_to_n}
  Suppose $n$ is a positive integer for which $7n$ can be represented
  by the quadratic form \emph{1-1-7-7}. Then $n$ can also be
  represented by this form.
\end{lemma}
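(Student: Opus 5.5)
The approach is elementary: use the fact that $-1$ is a quadratic non-residue modulo $7$ to force divisibility of two of the variables, and then swap the roles of the two pairs of coordinates. No use of the maximal order $\hh_{1,7}$ should be needed.

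Suppose $7n = t^2 + x^2 + 7y^2 + 7z^2$ with $t,x,y,z \in \ZZ$.

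1. Reducing modulo $7$ gives $t^2 + x^2 \equiv 0 \pmod{7}$.
2. If $x \not\equiv 0 \pmod 7$, then $(t x^{-1})^2 \equiv -1 \pmod 7$. This is impossible, since $7 \equiv 3 \pmod 4$ and so $-1$ is not a square modulo $7$. (Equivalently, the squares modulo $7$ are $0,1,2,4$, and no two nonzero ones sum to $0$.)
3. Hence $7 \mid x$, and then also $7 \mid t$. Write $t = 7t'$ and $x = 7x'$.
4. Substituting, we get $7n = 49t'^2 + 49x'^2 + 7y^2 + 7z^2$. Dividing by $7$ gives
\[ n = y^2 + z^2 + 7t'^2 + 7x'^2, \]
which is a representation of $n$ by the 1-1-7-7 form.

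This can also be phrased quaternionically, in the spirit of the paper. Take $\alpha = t + xi + yj + zk \in \ll_{1,7}$. The argument above shows that $\alpha = 7(t' + x'i) + (y + zi)j$. Then $\alpha j^{-1} = -\tfrac{1}{7}\alpha j$ lies in $\ll_{1,7}$ and has norm $n$. So this is a metaconjugation with $\sigma = 1$ and $\tau = j$, except that here the norms of $\sigma$ and $\tau$ deliberately differ by a factor of $7$.

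I do not expect any real obstacle. The only point requiring care is step 2, the non-residue argument modulo $7$.
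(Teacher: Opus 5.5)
Your proof is correct and is essentially the paper's argument: reduce modulo $7$, use that no two nonzero squares modulo $7$ sum to $0$ to get $7\mid t$ and $7\mid x$, then divide by $7$ and swap the roles of $(t,x)$ and $(y,z)$. The quaternionic rephrasing, giving $\alpha j^{-1} = y + zi - t'j - x'k \in \ll_{1,7}$, is also correct; it is an optional addition that the paper does not include.
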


\begin{proof}
  Suppose there are $t,x,y,z \in \ZZ$ such that
  $t^2 + x^2 + 7y^2+7 z^2 = 7n$. Then, $t^2 + x^2 \equiv 0
  \pmod{7}$. As the only squares modulo $7$ are $0,1,2,4$, the only
  possibility is to have $x \equiv t \equiv 0 \pmod{7}$. But then
  \[
    7 \left( \frac{t}{7} \right)^2 + 7 \left(\dfrac{x}{7} \right)^2 +
    y^2 + z^2 = n.
  \]
\end{proof}

From this we can immediately extract the following conclusion.

\begin{prop}\label{Necessary}
  The form \emph{1-1-7-7} does not represent numbers of the form
  $2^\varepsilon \cdot 3 \cdot 7^\ell$ with $\varepsilon \in \{0,1\}$
  and $\ell \in \NN_0$.
\end{prop}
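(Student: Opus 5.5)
We argue by induction on $\ell$, using Lemma~\ref{7n_to_n} to descend from $\ell$ to $\ell-1$. The base case $\ell=0$ asks us to show that neither $3$ nor $6$ is represented by $t^2+x^2+7y^2+7z^2$.

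\emph{Base case.} If $y$ or $z$ were nonzero, the value would be at least $7>6$. So a representation of $3$ or $6$ would need $y=z=0$, i.e.\ $3=t^2+x^2$ or $6=t^2+x^2$. Neither holds: the sums of two squares up to $6$ are $0,1,2,4,5$, so $3$ and $6$ do not appear. (One can also note that $3\equiv 3\pmod 4$, and that $6=2\cdot 3$ has the prime $3\equiv 3 \pmod 4$ to an odd power.)

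\emph{Inductive step.} Let $\ell\ge 1$ and suppose $2^\varepsilon\cdot 3\cdot 7^\ell$ were represented. Write it as $7n$ with $n=2^\varepsilon\cdot 3\cdot 7^{\ell-1}$. By Lemma~\ref{7n_to_n}, $n$ would then be represented. Repeating this $\ell$ times, or invoking the induction hypothesis, shows that $2^\varepsilon\cdot 3$ would be represented, contradicting the base case.

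No step is a real obstacle. The only substantive input is Lemma~\ref{7n_to_n}, which already handles the divisibility-by-$7$ descent (the fact that $-1$ is a non-residue mod $7$). What remains is the finite check in the base case.
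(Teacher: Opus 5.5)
Your proof is correct and follows the paper's argument: it combines the fact that $3$ and $6$ are not represented with the descent of Lemma~\ref{7n_to_n}. You also carry out the finite check for $3$ and $6$ explicitly, which the paper only asserts.
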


\begin{proof}
  This follows from the fact that neither 3 nor 6 are represented by
  1-1-7-7 and Lemma~\ref{7n_to_n}.
\end{proof}

To see that all the other numbers are representable by the form in
question, we need to show that if $n\in\NN$ is not of the above form,
then there exists $\alpha \in \ll_{1,7}$ such that $\N(\alpha) =
n$. Using the multiplicativity of the norm, we show in the next
proposition, as $1+i \in \ll_{1,7}$ has norm $2$, that in order to
prove our result it is enough to do it for all odd primes $p\neq 3$,
and for all numbers of the form $3p$, with $p\neq 2, 7$.

\begin{prop}\label{p+3p}
  If the form \emph{1-1-7-7} represents $p$ for all primes $p\neq 3$,
  and represents $3p$ for all odd primes $p\neq 7$, then it represents
  all $n\in\NN$ not equal $ 2^\varepsilon \cdot 3 \cdot 7^\ell$ for
  any $\varepsilon \in \{0,1\}$, $\ell \in \NN_0$.
\end{prop}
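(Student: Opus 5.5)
The plan is to use multiplicativity of the norm on $\ll_{1,7}$ (the norm of a product is the product of the norms). Thus the set $S$ of positive integers represented by 1-1-7-7 is closed under multiplication. It contains $1=\N(1)$, $2=\N(1+i)$ and $7=\N(j)$. By hypothesis it also contains every prime $p\neq 3$ and every $3p$ with $p$ an odd prime, $p\neq 7$.

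Let $n\in\NN$ be not of the form $2^\varepsilon\cdot 3\cdot 7^\ell$, and factor $n=2^e\,3^f\,7^\ell\,m$ with $\gcd(m,42)=1$. Split into cases according to $f$.

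\emph{Case $f=0$.} Then $n$ is a product of $2$'s, $7$'s and primes different from $3$, all of which lie in $S$. Hence $n\in S$.

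\emph{Case $f\geq 2$.} Here I need $9\in S$, and $9=3^2+0^2=\N(3)$ does it. Writing $3^f=9^{\lfloor f/2\rfloor}\cdot 3^{f-2\lfloor f/2\rfloor}$, only a single leftover factor $3$ might remain. If one remains, $f$ is odd and at least $3$, so $3^f=27\cdot 9^{(f-3)/2}$, and it suffices to have $27\in S$. Indeed $27=3^2+3^2+7\cdot 0^2+\ldots$ fails, but $27=2^2+2^2+7\cdot 1^2+7\cdot 1^2=4+4+7+7=22$ also fails. Instead I note that $27=3\cdot 9$ and $3\cdot 3=9$, so I would rather absorb the leftover $3$ into another prime as below.

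\emph{Case $f$ odd (including $f=1$).} One factor $3$ must be paired with some other prime. If $m>1$, pick a prime $p\mid m$, which is odd and $\neq 7$. Then $3p\in S$, and the rest of $n$ is a product of elements of $S$ (the $2$'s, $7$'s, the primes of $m/p$, and the $9$'s), so $n\in S$. If $m=1$, then $n=2^e 3^f 7^\ell$, and since $n$ is excluded from the exceptional forms, either $e\geq 2$ or $f\geq 3$.
\begin{itemize}
\item If $e\geq 2$, use $12\in S$, since $12=1^2+2^2+7\cdot 1^2$, together with $2$'s, $9$'s and $7$'s.
\item If $e\leq 1$ and $f\geq 3$, use $27\in S$, since $27=1^2+1^2+7\cdot 0^2+\ldots$ fails. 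So instead try $27=4^2+2^2+7\cdot 1^2=16+4+7$, which works.
\end{itemize}

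Thus $n\in S$ in all cases. The main obstacle is only this bookkeeping with the prime $3$ when $m=1$: I must exhibit small explicit representations, namely $9=\N(3)$, $12=1+4+7$ and $27=16+4+7$. With these, every admissible $n$ is written as a product of elements of $S$.
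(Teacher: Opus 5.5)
Your argument is correct. It shares the paper's skeleton: multiplicativity, absorbing even powers of $3$ via $9=\N(3)$, pairing a leftover $3$ with a prime $p\mid m$ through the hypothesis on $3p$, and using a representation of $27$ when $f\ge 3$ is odd.

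The genuine difference is how you handle the prime $2$.
\begin{itemize}
\item The paper disposes of every $n=4m$ at the outset by invoking Theorem~\ref{PID_form_universal} for the PID $\hh_{1,7}$. It writes $m=\N(\alpha)$ with $\alpha\in\hh_{1,7}$ and observes $2\alpha\in\ll_{1,7}$. After that it only has to treat $n$ with $4\nmid m$.
\item You keep the full factorization $n=2^e3^f7^\ell m$. You only need the single explicit representation $12=1^2+2^2+7\cdot 1^2$, for the residual case $m=1$, $e\ge 2$, $f$ odd.
\end{itemize}
Your route is therefore completely elementary given the hypotheses. It uses no maximal-order or PID input, only $9$, $12$ and $27=4^2+2^2+7\cdot 1^2$, the last in place of the paper's $27=\N(2+3i+j+k)$. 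The paper's route proves along the way the stronger, hypothesis-free fact that every multiple of $4$ is represented, which shortens its bookkeeping with the prime $2$.

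On presentation, remove the recorded dead ends. These are the representations of $27$ you say ``fail'', including the one that actually sums to $22$, and the muddled paragraph for $f\ge 2$. The clean split is:
\begin{itemize}
\item $f$ even: use powers of $9$.
\item $f$ odd and $m>1$: use $3p$ for a prime $p\mid m$.
\item $f$ odd and $m=1$: use $12$ if $e\ge 2$, and $27$ if $e\le 1$, $f\ge 3$.
\end{itemize}
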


\begin{proof}
  Suppose that the form 1-1-7-7 represents $p$ for all primes
  $p\neq 3$, and represents $3p$ for all odd primes $p\neq 7$. Let $n$
  be not of the form $2^\varepsilon \cdot 3 \cdot 7^\ell$, for any
  $\varepsilon \in \{0,1\}$, $\ell \in \NN_0$.
  
  If $n=4m$ for some $m \in \NN$, then by
  Theorem~\ref{PID_form_universal}, there is $\alpha \in \hh_{1,7}$
  such that $\N(\alpha) = m$. In other words, there are
  $a,b,c,d \in \ZZ$ such that
  \[m = \left( a + \frac{c}{2} \right)^2 + \left( b + \frac{d}{2}
    \right)^2 + 7 \left( \frac{c}{2} \right)^2 + 7 \left( \frac{d}{2}
    \right)^2. \]
  But then
  \[ 4 m = \left( 2 a + c \right)^2 + \left( 2 b + d \right)^2 + 7 c^2
    + 7 d^2, \]
  and thus the form 1-1-7-7 represents $n$. 
  
  Now, as the norm is multiplicative, the hypothesis entails that any
  number that is not a multiple of $3$ is represented by the form
  1-1-7-7.  We are, thus, reduced to the case $n= 3^t m$ for some
  $t,m \in \NN$, with $t \geq 1$, $3 \nmid m$, and $4 \nmid m$. Let
  then $\sigma \in \ll_{1,7}$ be such that $m=\N(\sigma)$. If $t$ is
  even, $ n = 3^t m = \N(3^{t/2} \sigma)$. If $t$ is odd and greater
  than 1, then $3^t = 3^{2s+3} = 9^s \cdot 27$ for some $s\in \NN_0$,
  and one has
  \[n = 3^t m= 9^s \cdot 27\, m = \N(3^s (2+3i+j+k)\, \sigma). \]
  
  If $t=1$, then $n=3m$ with $m$ divisible by some prime $p\neq 2,
  7$. Hence, we have $n = 3p r$, for $r \in \NN$ such that
  $3 \nmid r$. By hypothesis, both $3p$ and $r$ are represented
  by 1-1-7-7, then so is $n$.
\end{proof}

We will now use the fact that the norm form of $\hh_{1,7}$ is
universal to prove that there is an element in $\ll_{1,7}$ of norm
equal to $p$, for any odd prime $p\neq 3$, as well as elements in
$\ll_{1,7}$ of norm $3p$ for odd primes $p\neq 7$.

Let us start by showing how to determine an element in $\ll_{1,7}$
with norm equal to a prime $p> 3$. Let
$\gamma = a + bi + c \omega + d i \omega$ be an element of $\hh_{1,7}$
with norm $p$. When $\gamma \in \ll_{1,7}$ we are done, and therefore
we assume that $\gamma \in \hh_{1,7} \setminus \ll_{1,7}$. It is now
enough to show that there always are $\sigma, \tau\in\hh_{1,7}$ with
the same norm and such that $\sigma \gamma \tau^{-1} \in
\ll_{1,7}$. It turns out that one can always choose quaternions
$\sigma, \tau$ of norm $6$ that do the job.

\begin{prop}\label{H7_to_L7}
  If $\gamma \in \hh_{1,7} \setminus \ll_{1,7}$ is such that
  $\N(\gamma)=p$ for a prime $p>3$, then there exist
  $\sigma, \tau \in \hh_{1,7}$ such that $\N(\sigma)=\N(\tau)=6$ and
  \[ \sigma \gamma \tau^{-1} \in \ll_{1,7}. \]
\end{prop}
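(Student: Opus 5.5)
Since $\gamma\notin\ll_{1,7}$ and $\N(\gamma)=p$ is odd, Proposition~\ref{diff_parity} says $c\not\equiv d\pmod 2$. By Remark~\ref{d_odd}, after replacing $\gamma$ by $i\gamma$ (allowed, since $i\in\ll_{1,7}^*$ and $\sigma(i\gamma)\tau^{-1}=(\sigma i)\gamma\tau^{-1}$ with $\N(\sigma i)=\N(\sigma)$), we assume $c$ odd and $d$ even. The plan mirrors Proposition~\ref{H5_to_L5}, but because $\N(\gamma)=p$ is now prime, the choice of $\sigma,\tau$ can be tied to the residues of $\gamma$.

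First I reduce the condition $\sigma\gamma\tau^{-1}\in\ll_{1,7}$ to congruences. Write $\sigma\gamma\bar\tau=t+xi+y\omega+zi\omega$ with integer coordinates. Since $\tau^{-1}=\bar\tau/6$, the element $\sigma\gamma\tau^{-1}$ lies in $\hh_{1,7}$ iff $6\mid t,x,y,z$. By Lemma~\ref{L7_prop} it then lies in $\ll_{1,7}$ iff in addition $12\mid y$ and $12\mid z$. So the condition depends only on $\gamma$ modulo $12\hh_{1,7}$, more precisely on the coordinates $(a,b,c,d)$ modulo $12$. By the Chinese remainder theorem it splits into a condition modulo $3$ (namely $3\mid$ all coordinates) and a condition modulo $4$ on the coordinates of $\sigma\gamma\bar\tau$.

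Next I fix explicit elements of norm $6$ and of norms $2$ and $3$. The norm-$2$ elements include $1+i$ and $\omega$, whose norm is $2$. The norm-$3$ elements include $\omega+i$ and $1+\omega$, whose norm is $1+1+2\cdot1\cdot\ldots$, and I will list them from the norm form $a^2+b^2+2c^2+2d^2+ac+bd=3$. I take $\sigma=\sigma_2\sigma_3$ and $\tau=\tau_3\tau_2$ with $\N(\sigma_2)=\N(\tau_2)=2$ and $\N(\sigma_3)=\N(\tau_3)=3$, and treat the two primes separately.

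The $3$-part means choosing the norm-$3$ factors so that $3$ divides the product. Since $p\neq3$, the image of $\gamma$ in $\hh_{1,7}/3\hh_{1,7}\cong M_2(\FF_3)$ is invertible. For each norm-$3$ element $\tau_3$ with $\bar\tau_3$ of rank one, I want $\sigma_3$ so that $\sigma_3\gamma\bar\tau_3\equiv0\pmod 3$. This amounts to requiring that the kernel of $\sigma_3$ contains $\gamma\bar\tau_3(\FF_3^2)$, and the primes above $3$ realize every line, as $\hh_{1,7}$ is a PID. So the $3$-part can always be solved, with freedom to spare.

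The $2$-part is the hard step, because $2$ ramifies in $\left(\frac{-1,-7}{\QQ}\right)$? In fact $(-1,-7)_2=1$, so $2$ splits and $\hh_{1,7}\otimes\ZZ_2\cong M_2(\ZZ_2)$. The condition is then that $\sigma_2\gamma\bar\tau_2$ is $\equiv0\pmod2$ and moreover lands in the index-$4$ sublattice $2\ll_{1,7}$ modulo $4$. For this I enumerate, up to units $\pm1,\pm i$, the finitely many norm-$2$ elements and the residue classes of $\gamma$ modulo $4$ with $c$ odd, $d$ even and $\N(\gamma)$ odd, and check case by case that some pair works. I expect this finite verification to be the main obstacle, and I would carry it out by computer as in Proposition~\ref{H5_to_L5}. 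If some residue class fails with the $2$-part alone, I would instead search over all norm-$6$ pairs $(\sigma,\tau)$ directly, checking all $\gamma$ modulo $12$ that satisfy the parity constraints and are invertible modulo $3$.
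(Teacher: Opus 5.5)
Your reduction to $\sigma\gamma\bar\tau\in 6\ll_{1,7}$, which depends only on $\gamma$ modulo $12$, is correct. The fallback in your last sentence is exactly the paper's proof. Your primary route, however, fails: the congruence splits by the Chinese remainder theorem, but the unknowns $\sigma,\tau$ do not.

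\emph{The $2$-adic condition is not a condition on $\gamma$.} With your ordering $\tau=\tau_3\tau_2$ the product is $\sigma_2\sigma_3\gamma\bar\tau_2\bar\tau_3$, which does not nest; you need $\tau=\tau_2\tau_3$. Even then, the $2$-adic condition is $\sigma_2\gamma'\bar\tau_2\in 2\ll_{1,7}$ with $\gamma'=\sigma_3\gamma\tau_3^{-1}$. It is not a condition on $\sigma_2\gamma\bar\tau_2$. The norm-$3$ factors are units at $2$, but they move $\gamma$ to a different class modulo $4$. Moreover, which $\sigma_3$ is admissible for a given $\tau_3$ depends on $\gamma$ modulo $3$.

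\emph{The proposed norm-$2$ check is false.} Take $\gamma=a+bi+c\omega+d\,i\omega$ with $b,c$ odd, $d$ even and $a$ even. Up to units, the only products $\sigma_2\gamma\bar\tau_2$ lying in $2\hh_{1,7}$ are $\omega\gamma(1+i)$, $(1+i)\gamma\omega$ and $\bar\omega\gamma\bar\omega$. Their $(\omega,i\omega)$-coordinates are $(a-b+c,\,-a-b-c)$, $(a-b+c-d,\,a+b+c+d)$ and $(-a-2c,\,2d)$. In the first pair the sum is $\equiv 2\pmod 4$, and in the second pair the difference is $\equiv 2\pmod 4$. The third pair vanishes modulo $4$ only if $a\equiv 2\pmod 4$. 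So $\gamma=4+i+\omega$, of norm $23$, admits no norm-$2$ metaconjugation into $\ll_{1,7}$. For $a$ odd the analogous condition is $a\equiv c\pmod 4$, so half of all residue classes fail.

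\emph{What actually rescues the argument is the coupling you tried to remove.} For each of the four classes of $\tau_3$ there is exactly one admissible $\sigma_3$. This gives four elements $\gamma'$ of norm $p$, and you need one of them to land in a good class modulo $4$. If $\gamma'$ lands in $\ll_{1,7}$ itself, $\sigma_2=\tau_2=1+i$ finishes, because conjugation by $1+i$ preserves $\ll_{1,7}$. For $\gamma=4+i+\omega$, the choice $\sigma_3=1+i\omega$, $\tau_3=1-i+i\omega$ gives $\gamma'=4+j$. Then $\sigma=(1+i)(1+i\omega)$ and $\tau=(1+i)(1-i+i\omega)$ yield $\sigma\gamma\tau^{-1}=4+k$.

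Whether such a choice always exists is a condition on $\gamma$ modulo $12$. Verifying it is precisely the paper's finite computation, i.e.\ your fallback. So present the fallback as the proof: the $2$/$3$ split can organize that search, but it cannot replace it. Minor point: $1+\omega$ has norm $4$, not $3$; $i+\omega$ and $1\pm i\omega$ are norm-$3$ elements.
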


\begin{proof}
  By Remark~\ref{d_odd}, we can assume that $c$ is odd and $d$ is
  even. Since the norm of $\gamma$ is odd and
  $\N(\gamma) = a^2 + b^2 + 2c^2 + 2 d^2 + ac + bd$, one gets that $b$
  must be odd. Therefore, we can assume, renaming the coefficients if
  necessary, that $\gamma$ can be written as
  \begin{equation}\label{forma_gamma}
    \gamma = a + (2b+1)i+(2c+1)\omega + 2d\, i\omega,
  \end{equation}
  where $a,b,c,d \in \ZZ$.
  
  By the discussion of Section~\ref{Sec:general_set}, we have that if
  $\sigma \gamma \bar{\tau} = t + xi + y \omega + z i \omega$, with
  $\sigma, \tau \in \hh_{1,7}$, and $\N(\sigma) =\N(\tau) = 6$, then
  one has that $\sigma \gamma \tau^{-1} \in \hh_{1,7}$ if and only if
  $6$ divides $t,x,y,z$. Additionally, if
  $\sigma \gamma \tau^{-1}\in \hh_{1,7}$, then it is in $\ll_{1,7}$ if
  and only if $2$ divides $y$ and $z$, i.e.~the coefficients of
  $\omega$ and $i\omega$. Hence,
  \[
    \sigma \gamma \tau^{-1} \in \ll_{1,7} \iff t,x \equiv 0 \pmod{6}
    \quad\text{and}\quad y,z \equiv 0\pmod{12}.
  \]

  From this it follows that when the coefficients of
  $\sigma \gamma \bar{\tau}$ are divisible by $12$ then
  $\sigma \gamma \tau^{-1} \in \ll_{1,7}$. If one writes
  $\gamma = 12\beta + \Tilde{\gamma}$ for some
  $\Tilde{\gamma}, \beta \in \hh_{1,7}$, with the coefficients of
  $\Tilde{\gamma}$ in \{0, 1, \dots, 11\}, we have
  \[\sigma \gamma \tau^{-1} = 2\sigma \beta \bar{\tau} + \sigma
    \Tilde{\gamma} \tau^{-1}.\] As
  $2\sigma \beta \bar{\tau} \in \ll_{1,7}$, we have that
  $\sigma \gamma \tau^{-1} \in \ll_{1,7}$ if and only if
  $\sigma \Tilde{\gamma} \tau^{-1} \in \ll_{1,7}$. Note that
  $\Tilde{\gamma}$ is still of the form of \eqref{forma_gamma}, and if
  $\N(\Tilde{\gamma}) \equiv 0 \pmod{3}$, then
  $\N(\gamma) \equiv 0 \pmod{3}$, since
  $\gamma = 12\beta + \Tilde{\gamma}$. However, as $\gamma$ is a prime
  above $p \neq 3$, this is impossible, so we can disregard the cases
  where $\Tilde{\gamma}$ has norm a multiple of $3$.

  Based on the above discussion, our goal reduces to check that, for
  each $\Tilde{\gamma}\in \hh_{1,7}$, with coefficients in
  $\{0, 1, \dots, 11\}$, there always exist quaternions
  $\sigma, \tau \in \hh_{1,7}$ of norm 6, up to left associates, such
  that $\sigma \Tilde{\gamma}\tau^{-1} \in \ll_{1,7}$. Using the
  PARI/GP \cite{PARI} code presented in the next section, we have
  verified computationally our claim. The code was run in a laptop of
  16GB of RAM, and it took around 2 seconds to check all possibilities
  of $\Tilde{\gamma}$.
\end{proof}

From the above discussion, we get the following corollary.

\begin{cor}\label{p_L7}
  Let $p\neq 3$ be a rational prime. Then $p$ is represented by
  \emph{1-1-7-7}.
\end{cor}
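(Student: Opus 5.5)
The corollary splits into the small primes $p=2$ and $p=7$, handled by explicit elements, and the primes $p>3$ other than $7$, handled by metaconjugation. For $p=2$, the Lipschitz quaternion $1+i\in\ll_{1,7}$ has norm $1+1=2$, so $2$ is represented. For $p=7$ one takes $j\in\ll_{1,7}$, which has norm $7$. (The argument below actually covers $p=7$ too, but the explicit element is simpler.)

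Now let $p>3$ be prime. By Theorem~\ref{PID_form_universal}, applied to the PID order $\hh_{1,7}$ that contains $\ll_{1,7}$, there is some $\gamma\in\hh_{1,7}$ with $\N(\gamma)=p$. If $\gamma\in\ll_{1,7}$, then writing $\gamma=t+xi+yj+zk$ gives $p=t^2+x^2+7y^2+7z^2$ and we are done. Otherwise $\gamma\in\hh_{1,7}\setminus\ll_{1,7}$. In that case Proposition~\ref{H7_to_L7} supplies $\sigma,\tau\in\hh_{1,7}$ with $\N(\sigma)=\N(\tau)=6$ and $\alpha:=\sigma\gamma\tau^{-1}\in\ll_{1,7}$. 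Since the norm is multiplicative and $\N(\tau^{-1})=\N(\tau)^{-1}$, we get
\[
\N(\alpha)=\N(\sigma)\,\N(\gamma)\,\N(\tau)^{-1}=6\cdot p\cdot 6^{-1}=p .
\]
Writing $\alpha$ in the basis $1,i,j,k$ then gives a representation of $p$ by the form 1-1-7-7.

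All of the real difficulty sits in Proposition~\ref{H7_to_L7}, namely its finite computational verification over residues modulo $12$, which we take as given. So the only point to check here is that its hypotheses hold: $\gamma$ must lie outside $\ll_{1,7}$ and have prime norm $p>3$. Both are true in the case where that proposition is invoked.
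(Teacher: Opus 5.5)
Your proposal is correct and follows the paper's own route. You obtain $\gamma\in\hh_{1,7}$ of norm $p$ from Theorem~\ref{PID_form_universal}, and if $\gamma\notin\ll_{1,7}$ you metaconjugate it into $\ll_{1,7}$ via Proposition~\ref{H7_to_L7}, with $p=2$ handled by $1+i$. Your separate treatment of $p=7$ via $j$ is harmless but not needed, since Proposition~\ref{H7_to_L7} already covers every prime $p>3$.
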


All that remains to be seen is that $3p$ is represented by 1-1-1-7 for
all odd primes $p\neq 7$. First, we show that, for an odd prime $p$,
the fact that $3p$ is represented by 1-1-7-7 is equivalent to the
existence of $\gamma \in \hh_{1,7} \setminus \ll_{1,7}$ with norm $p$.

\begin{prop}\label{H7_to_3p}
  The form \emph{1-1-7-7} represents $3p$ for an odd prime $p$ if and
  only if there exists $\gamma \in \hh_{1,7} \setminus \ll_{1,7}$ such
  that $\N(\gamma)=p$.
\end{prop}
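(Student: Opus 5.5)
The plan is to work modulo $2$. The point is that $2\hh_{1,7}\subseteq \ll_{1,7}$, since $2\omega = 1+j$ and $2i\omega = i+k$. So membership in $\ll_{1,7}$ depends only on the image in $\hh_{1,7}/2\hh_{1,7}$, and by Lemma~\ref{L7_prop} that image must lie in the span of $1, i$.

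\textbf{Setup mod 2.} Denote reduction by $x \mapsto \bar x$ (not to be confused with conjugation); it is only used in this paragraph. Set $R=\hh_{1,7}/2\hh_{1,7}$. Then $\ll_{1,7}/2\hh_{1,7} = \{0,1,i,1+i\}=\FF_2[i]$, whose only invertible elements are $1$ and $i$. An element of odd norm is invertible in $R$. Hence for odd-norm $\delta\in\hh_{1,7}$ we have
\[
\delta\in\ll_{1,7} \iff \delta \equiv 1 \text{ or } i \pmod{2\hh_{1,7}} .
\]

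\textbf{Elements of norm 3.} Next I list the elements of norm $3$ in $\hh_{1,7}$. Solving $a^2+b^2+2c^2+2d^2+ac+bd=3$ forces one of $c,d$ to be $\pm1$ and the other to be $0$. In the case $c=\pm1$, $d=0$ this gives $a\in\{0,-c\}$ and $b=\pm1$. So we get elements such as $i+\omega$, $-i+\omega$, $-1+i+\omega$, $-1-i+\omega$, their negatives, and the analogous ones with $d=\pm1$ (e.g.\ $1+i\omega$, obtained by multiplying by the unit $i$). None of them lies in $\ll_{1,7}$, which is consistent with $3$ not being represented.

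\textbf{($\Leftarrow$).} Let $\gamma\in\hh_{1,7}\setminus\ll_{1,7}$ with $\N(\gamma)=p$ odd. By Proposition~\ref{diff_parity} (and Remark~\ref{d_odd}), $\gamma \bmod 2$ is one of the invertible classes of $R$ other than $1$ and $i$. I want $\pi$ of norm $3$ with $\pi\gamma\equiv 1$ or $i \pmod 2$, i.e.\ $\pi\equiv\gamma^{-1}$ or $i\gamma^{-1}$. Since $\gamma^{-1}\notin\{1,i\}$ mod $2$, this is a finite check. First I compute the reductions mod $2$ of the norm-$3$ elements listed above. Then I verify that they cover every invertible class of $R$ outside $\{1,i\}$, up to left multiplication by $i$. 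Multiplying on the right instead, using $\gamma\pi$, gives extra flexibility if needed. Then $\N(\pi\gamma)=3p$ and $\pi\gamma\in\ll_{1,7}$, as desired. I expect this finite mod-$2$ bookkeeping in $R\cong M_2(\FF_2)$ to be the main (though routine) obstacle: one must make sure the available norm-$3$ residues really hit the needed classes. I would first try writing everything in the basis $1,i,\omega,i\omega$ with $\omega^2=\omega-2\equiv\omega$ and $\omega i = i - i\omega$, or similar relations, all computed mod $2$.

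\textbf{($\Rightarrow$).} If $p=3$, take $\gamma=i+\omega$. Otherwise, let $\alpha\in\ll_{1,7}$ with $\N(\alpha)=3p$. Then $\alpha$ is primitive in $\hh_{1,7}$, since a rational prime $q\mid\alpha$ would force $q^2\mid 3p$. Since $\hh_{1,7}$ is a PID, factor $\alpha=\pi\gamma$ with $\N(\pi)=3$ and $\N(\gamma)=p$. Suppose $\gamma\in\ll_{1,7}$. Then mod $2$ both $\alpha$ and $\gamma$ lie in $\{1,i\}$, so $\pi\equiv\alpha\gamma^{-1}\in\{1,i,-1,-i\}$ modulo $2\hh_{1,7}$. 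This gives $\pi\in\ll_{1,7}+2\hh_{1,7}=\ll_{1,7}$, so $3$ would be represented by 1-1-7-7, contradicting Proposition~\ref{Necessary}. Hence $\gamma\in\hh_{1,7}\setminus\ll_{1,7}$.
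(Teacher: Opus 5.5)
Your proof is correct, and it takes a genuinely different, more structural route than the paper.

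\textbf{The paper's route.} The paper works in coordinates throughout. For ($\Rightarrow$) it writes $\gamma=a+bi+2c\omega+2d\,i\omega$, multiplies it by four explicit representatives of the primes above $3$, and checks parities. For ($\Leftarrow$) it normalizes $\gamma=a+(2b+1)i+(2c+1)\omega+2d\,i\omega$ and picks $1-i\omega$ or $1+i-\omega$ according to the parity of $a$.

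\textbf{Your route.} You use $2\hh_{1,7}\subseteq\ll_{1,7}$ to turn membership in $\ll_{1,7}$ into a condition in $\hh_{1,7}/2\hh_{1,7}$. There the odd-norm elements of $\ll_{1,7}$ reduce to the subgroup $\{1,i\}$ of the order-$6$ unit group; the inverse of an odd-norm $\delta$ modulo $2$ is simply its conjugate. This makes ($\Rightarrow$) a one-line coset argument with no enumeration of primes above $3$. Your separate handling of $p=3$ also sidesteps the fact that $\alpha$ need not be primitive there (e.g.\ $\alpha=3$), a point the paper passes over.

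\textbf{The check you left open.} The finite check in ($\Leftarrow$) succeeds, and more strongly than you need. The odd-norm classes modulo $2$ are those with $a(1+c)+b(1+d)$ odd, namely $1,\ i,\ i+\omega,\ 1+i+\omega,\ 1+i\omega,\ 1+i+i\omega$. Your norm-$3$ elements reduce exactly onto the last four. Since $\{1,i\}$ is a subgroup, $\gamma^{-1}\bmod 2$ is among those four, so you may take $\pi\equiv\gamma^{-1}$. This gives $\pi\gamma\in 1+2\hh_{1,7}\subseteq\ll_{1,7}$, and neither the $i$-twist nor right multiplication is needed. The paper's choices are instances of this, since $(1+i\omega)(1+i+\omega)\equiv i$ and $(1+i+\omega)(i+\omega)\equiv 1$ modulo $2\hh_{1,7}$.

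\textbf{What each buys.} Your version explains why the statement holds, including why Proposition~\ref{diff_parity} is really a statement about units of $\hh_{1,7}/2\hh_{1,7}$. The paper's version gives explicit coordinate formulas for the resulting elements of $\ll_{1,7}$ of norm $3p$.
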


\begin{proof}
  First, suppose that there exist $\alpha \in \ll_{1,7}$ such that
  $\N(\alpha)=3p$ for an odd prime $p$. Then, as
  $\ll_{1,7} \subset \hh_{1,7}$, by \cite[Thm.~2]{OrdersPID}, the
  quaternion $\alpha$ has a factorization modeled by $3p$, say
  $\sigma \gamma$, where $\N(\sigma)=3$ and $\N(\gamma)=p$, unique up
  to unit-migration. Suppose that $\gamma$ is in $\ll_{1,7}$, then, by
  Lemma~\ref{L7_prop}, it can be written as
  \[ \gamma = a + bi + 2c \omega + 2d i \omega,\] for some
  $a,b,c,d \in \ZZ$. From $ p = a^2 + b^2 + 8c^2 + 8d^2 + 2ac + 2bd$,
  one sees that $a$ and $b$ have different parity.
  
  Without loss of generality, we may assume that $\sigma$ is one of
  the representatives of the four right non-associate classes of
  primes above $3$.  Now,
  {\small \begin{alignat*}{1} (1-i+i\omega) \gamma & = (a-4d) +
      (b-a-4c)i + (b + 2c + 2d) \omega + (a + 2d)i\omega, \\
      (1+i\omega) \gamma & = (a -b -4d) + (b-4c)i + (b + 2c)\omega +
      (a + 2c + 2d) i \omega,\\
      (1+i-\omega) \gamma & = (a -b + 4c) + (a -4d) i - (a + 2d)\omega
      + (b+2c+2d)i \omega, \\
      (1+i-i\omega) \gamma & = (a + 4d) + (a+b+4c)i +(-b+2c-2d)\omega
      + (-a +2d) i \omega.
    \end{alignat*}}%
  By Lemma~\ref{L7_prop}, any one of these quaternions is in
  $\ll_{1,7}$ if and only if $a \equiv b \equiv 0\pmod{2}$, a
  contradiction. Hence, $\gamma$ must be in
  $\hh_{1,7} \setminus \ll_{1,7}$.
 
  Now, suppose there exists $\gamma \in \hh_{1,7} \setminus \ll_{1,7}$
  such that $\N(\gamma)=p$, for all odd primes $p$. As in the proof of
  Proposition~\ref{H7_to_L7}, we can write $\gamma$ as
  \[
    \gamma = a + (2b+1)i + (2c+1) \omega + 2d i \omega,
  \]
  for $a,b,c,d \in \ZZ$. Computing the product of $\gamma$, on the
  left, by $1-i\omega $ and by $1+i-\omega$, two quaternions above $3$,
  we obtain:
  {\footnotesize
    \begin{alignat*}{1}
      (1-i \omega) \gamma = (1+a+2b+4d) + (3+2b+4c)i - 2(b-c) \omega
      + (2(d-c)-a-1) i \omega,\\
      (1+i -\omega) \gamma = (1+a-2b+4c) + (a-4d)i - (a+2d) \omega +
      2(b+c+d+1) i \omega.
    \end{alignat*}
  }
  The first quaternion is in $\ll_{1,7}$ if and only if
  $a \equiv 1 \pmod{2}$, and the second one is in $\ll_{1,7}$ if and
  only if $a \equiv 0 \pmod 2$. Thus, depending the parity of $a$, we
  may choose $\sigma$ accordingly so that
  $\sigma \gamma \in \ll_{1,7}$, and we see that $3p$ is represented
  by 1-1-7-7.
\end{proof}

To prove that $3p$ is represented by 1-1-7-7, for $p\neq 7$, it is
thus enough to show that there is a prime in
$\hh_{1,7} \setminus \ll_{1,7}$ above $p$.

\begin{prop}\label{p_H7minusL7}
  There exists $\gamma \in \hh_{1,7} \setminus \ll_{1,7}$ such that
  $\N(\gamma) = p$ for all odd prime $p\neq 7$.
\end{prop}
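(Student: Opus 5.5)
The plan is to start from an arbitrary prime of $\hh_{1,7}$ above $p$ and, if it happens to lie in $\ll_{1,7}$, move it out of $\ll_{1,7}$ by a metaconjugation with elements of norm $2$. By Theorem~\ref{PID_form_universal} there is some $\gamma\in\hh_{1,7}$ with $\N(\gamma)=p$. If $\gamma\notin\ll_{1,7}$ we are done, so assume $\gamma\in\ll_{1,7}$.

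By Lemma~\ref{L7_prop} we then have $\gamma=a+bi+2c\omega+2di\omega$. As in the proof of Proposition~\ref{H7_to_3p}, the identity $p=a^2+b^2+8c^2+8d^2+2ac+2bd$ forces $a\not\equiv b\pmod 2$. Replacing $\gamma$ by $i\gamma$ if necessary, we may fix the parities of $a$ and $b$, say $a$ odd and $b$ even.

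Next I use that $\omega$ and $\bar\omega$ have norm $2$, and that $2$ splits in $\hh_{1,7}$ since $2\nmid 7$. The candidates are $\gamma'=\omega\gamma\tau^{-1}$ with $\tau\in\hh_{1,7}$ of norm $2$. By the factorization theory of Section~\ref{Sec:general_set}, the element $\omega\gamma$ (norm $2p$) has a factorization $\omega\gamma=\gamma'\tau$ modelled on $p\cdot 2$ whenever $\omega\gamma$ is primitive. Here $\gamma'$ is a prime above $p$, unique up to unit migration. Primitivity of $\omega\gamma$ is automatic for $p$ odd: if $2\mid\omega\gamma$, then $\bar\omega\omega\gamma=2\gamma$ would be divisible by $2\bar\omega$, forcing $\bar\omega\mid\gamma$ and hence $2\mid p$. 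I would then compute $\gamma'$ explicitly. The simplest attempt is $\tau=\omega$, giving $\gamma'=\omega\gamma\omega^{-1}=\tfrac12\,\omega\gamma\bar\omega$, a conjugate of $\gamma$. This element lies in $\hh_{1,7}$ exactly when the four coordinates of $\omega\gamma\bar\omega$ in the basis $1,i,\omega,i\omega$ are even. Its membership in $\ll_{1,7}$ is then read off from the parities of the $\omega$ and $i\omega$ coordinates, via Lemma~\ref{L7_prop}. Using $a\not\equiv b \pmod 2$, the expectation is that one of these coordinates is odd, so that $\gamma'\in\hh_{1,7}\setminus\ll_{1,7}$.

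If conjugation by $\omega$ fails to land in $\hh_{1,7}$, or lands back in $\ll_{1,7}$, the fallback is to run through the short list of norm-$2$ elements. Up to units these are $\omega,\bar\omega,1+i$, together with $i\omega$ and similar elements. For each pair $(\sigma,\tau)$ drawn from this list I would check the condition ``$2$ divides all coordinates of $\sigma\gamma\bar\tau$, and the resulting $\omega$- or $i\omega$-coordinate is odd.'' These conditions depend only on $a,b,c,d$ modulo $4$ at most. So this is a finite check, either by hand or with the same PARI/GP approach as in Proposition~\ref{H7_to_L7}, and it mirrors the two-case computation in Proposition~\ref{H7_to_3p}.

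The main obstacle is this finite parity check: showing that for every residue pattern of $(a,b,c,d)$ with $a\not\equiv b\pmod 2$, at least one choice of $\sigma,\tau$ of norm $2$ yields an element of $\hh_{1,7}\setminus\ll_{1,7}$. The exclusion $p\neq 7$ should enter exactly here. For $p=7$ the primes above $7$ are, up to units, essentially $j$ (ramified, with a unique right ideal), and conjugation preserves the property of lying in $\ll_{1,7}$. Consistently with Proposition~\ref{Necessary}, $21$ is not represented. For $p=3$, an explicit element such as $1+i\omega\in\hh_{1,7}\setminus\ll_{1,7}$ settles the case directly, and can serve as a sanity check of the computation.
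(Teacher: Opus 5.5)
Your reduction to $\gamma\in\ll_{1,7}$, written in your notation as $\gamma=a+bi+2c\omega+2di\omega$ with $a$ odd and $b$ even, is fine. The step the whole proposal rests on is not: you claim that some metaconjugation by elements of norm $2$ always lands in $\hh_{1,7}\setminus\ll_{1,7}$, and that a finite check modulo $4$ certifies this. That claim is false.

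Up to left units there are only three elements of norm $2$, namely $1+i,\omega,\bar\omega$, so there are nine pairs. Take $\gamma=1+4\omega=3+2j\in\ll_{1,7}$, of norm $37$. For any $\sigma,\tau\in\hh_{1,7}$ of norm $2$,
\[
\sigma\gamma\tau^{-1}=\sigma\tau^{-1}+2\sigma\omega\bar\tau ,
\]
and $2\sigma\omega\bar\tau\in 2\hh_{1,7}\subseteq\ll_{1,7}$. So $\sigma\gamma\tau^{-1}\in\hh_{1,7}$ forces $\sigma\tau^{-1}\in\hh_{1,7}$. That element has norm $1$, so it is one of $\pm1,\pm i\in\ll_{1,7}$, and then $\sigma\gamma\tau^{-1}\in\ll_{1,7}$. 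Hence all nine pairs fail.

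Your first attempt already shows the problem. The $i\omega$-coordinate of $\omega\gamma\omega^{-1}$ is $-b/2-2d$, so that attempt only works when $b\equiv 2\pmod 4$. More generally, the same computation shows that if $\gamma\equiv 1\pmod{2^{m+1}\hh_{1,7}}$, no pair of norm $2^m$ works. The $2$-power norm needed cannot be bounded in advance, so no residue check modulo a fixed power of $2$ can close the argument.

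What is missing is an unbounded $2$-adic descent, which is what the paper does. Write instead $\gamma=(1+2a)+2bi+2c\omega+2di\omega$. Then
\[
\omega\gamma\omega^{-1}=(1+2a)+(2d-b)i+2c\omega-(b+2d)i\omega,\qquad
\bar\omega\gamma\bar\omega^{-1}=(1+2a)-(2b+2d)i+2c\omega+(b-d)i\omega .
\]
Set $r=\nu_2(2b)$ and $s=\nu_2(2d)$.
\begin{itemize}
\item If $r\le s$, conjugating by $\omega$ gives an element of the same shape whose $i$- and $i\omega$-coefficients both have valuation $r-1$.
\item If $r>s$, conjugating by $\bar\omega$ gives valuations $s$ and $s-1$.
\end{itemize}
Iterating, conjugation by $\omega^r$ or by $\bar\omega^s$ (an element of norm $2^r$ or $2^s$) produces an odd $i\omega$-coefficient.

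This requires $b$ or $d$ to be nonzero. The remaining case is $\gamma=(1+2a)+2c\omega\in\ZZ[\omega]$, which is exactly where your counterexample lives and where conjugation by $\omega$ or $\bar\omega$ does nothing. The paper handles it by first conjugating by $(1+i)\bar\omega=1+i-\omega-i\omega$, which gives $(1+2a+c)-ci+2ci\omega$, then multiplying by $i$ if $c$ is odd.

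This also corrects your account of where $p\neq 7$ enters. It is not used in any finite check. It is used only here, to exclude $1+2a+c=0$ in the odd-$c$ case, i.e.\ $\gamma=\pm j$.
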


\begin{proof}
  In the previous proof we provided two examples of elements in
  $\hh_{1,7} \setminus \ll_{1,7}$ of norm $3$. Thus, we may assume
  $p\neq 2, 3, 7$. Let $\gamma \in \ll_{1,7}$ be such that
  $\N(\gamma) = p$, whose existence is guaranteed by
  Corollary~\ref{p_L7}. By Lemma~\ref{L7_prop}, $\gamma$ can be
  written in the form $a + bi + 2c \omega + 2d i \omega$, for some
  $a,b,c,d \in \ZZ$. From $p = a^2 + b^2 + 8c^2 + 8d^2 + 2ac + 2bd$,
  one gets that $a$ and $b$ have different parity. As
  $i\gamma = -b + ai -2d \omega+ 2c i\omega$, we may assume $a$ odd
  and $b$ even, without loss of generality. Thus, renaming the
  coefficients if needed, we assume $\gamma$ can be written as
  \begin{equation}\label{gamma_L7_odd}
    \gamma = (1+2a) + 2bi + 2c \omega + 2d i \omega, 
  \end{equation}
  for some $a,b,c,d \in \ZZ$.
	
  Let us start by assuming that either the coefficient of $i$ or the
  coefficient of $i\omega$ is non-zero. Now, the conjugates of
  $\gamma$ by $\omega$ and by $\bar{\omega}$ are:
  \begin{alignat*}{1}
    \omega \gamma \omega^{-1} & = (1+2a) + (-b+2d)i +2c \omega -
    (b+2d) i \omega, \\
    \bar{\omega} \gamma \bar{\omega}^{-1} & = (1+2a) - (2b + 2d)i +
    2c\omega + (b - d)i \omega.
  \end{alignat*}
  Let $r=\nu_2(2b)$ and $s=\nu_2(2d)$, where $\nu_2$ denotes the
  2-adic valuation. Recall that one has
  $\nu_2(x+y) \geq \min \{ \nu_2(x),\nu_2(y)\}$, with equality if and
  only if $\nu_2(x) \neq \nu_2(y)$. Note that $r,s \geq 1$, and that
  at least one of them is smaller than $\infty$.
	
  We now consider the two possible cases: (1) $r \leq s $, and (2)
  $r > s $. In case (1), $\nu_2(-b+2d)=r-1=\nu_2(b+2d)$, and thus the
  2-adic valuation of the coefficients of $i$ and $i\omega$ in
  $\omega \gamma \omega^{-1}$ is strictly smaller than the 2-adic
  valuation of the same coefficients of $\gamma$. Thus, conjugating by
  $\omega^r$ leads to a quaternion whose coefficient of $i \omega$ is
  odd, and therefore it is in $\hh_{1,7} \setminus \ll_{1,7}$. In case
  (2), $\nu_2(2b+2d)=s$ and $\nu_2(b-d)=s-1$, and hence
  $\bar{\omega} \gamma \bar{\omega}^{-1}$ is such that the 2-adic
  valuation of its coefficient of $i$ is $s$ and of $i\omega$ is
  $s-1$, thus both have strictly smaller 2-adic valuations than the
  corresponding coefficients of $\gamma$. Hence, conjugating by
  $\bar{\omega}^s$ will produce a quaternion with odd coefficient of
  $i \omega$, and therefore in $\hh_{1,7} \setminus \ll_{1,7}$. In
  both cases, we have the desired result.
	
  Assume now that both the coefficients of $i$ and $i\omega$ are zero,
  i.e.~$\gamma$ is of the form $(1+2a) + 2c \omega$. Conjugating
  $\gamma$ by $1+i-\omega - i \omega$, a quaternion of norm $4$,
  yields:
  \[ (1+i-\omega - i \omega) \gamma (1+i-\omega - i \omega)^{-1} =
    (1+2a + c) - ci + 2c i\omega, \] and when $c$ is even this
  quaternion is of the form~\refeq{gamma_L7_odd}, and we are reduced
  to the previous case, as $c$ must not be not zero, since otherwise
  $\gamma$ would be an integer and its norm would not be
  prime. Finally, if $c$ is odd, then
  $i (1+i-\omega - i \omega) \gamma (1+i-\omega - i \omega)^{-1} = c+
  (1+2a + c)i - 2c \omega $ is of the form~\eqref{gamma_L7_odd}. If
  $1+2a= -c$, then we would have
  \[ \N(\gamma) = c^2 + 2(2c)^2 - (2c)c = 7c^2, \] which would
  contradict the assumption that $\N(\gamma)$ is a prime different
  from $7$. Therefore, $1+2a+c \neq 0$, and we are back to the case
  where at least one of the coefficients of $i$ and of $i\omega$ is
  non-zero. This completes the proof.
\end{proof}

The last two results imply the following corollary.

\begin{cor}\label{3p_L7}
  Let $p$ be an odd prime different from $7$. Then $3p$ is represented
  by \emph{1-1-7-7}.
\end{cor}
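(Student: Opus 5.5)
The corollary follows by chaining Proposition~\ref{p_H7minusL7} with the ``if'' direction of Proposition~\ref{H7_to_3p}. Let $p$ be an odd prime with $p\neq 7$.

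First, Proposition~\ref{p_H7minusL7} gives an element $\gamma\in\hh_{1,7}\setminus\ll_{1,7}$ with $\N(\gamma)=p$. For $p=3$ we can take one of the two explicit quaternions above $3$ used in the proof of Proposition~\ref{H7_to_3p}. For $p\neq 3$, Corollary~\ref{p_L7} supplies a Lipschitz prime above $p$, and the conjugation argument by powers of $\omega$ or $\bar\omega$ moves it out of $\ll_{1,7}$.

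Second, we apply the converse direction of Proposition~\ref{H7_to_3p}. Write $\gamma=a+(2b+1)i+(2c+1)\omega+2d\,i\omega$, which is allowed by Remark~\ref{d_odd} and the parity argument of Proposition~\ref{H7_to_L7}, since $p$ is odd. Then left multiplication by $1-i\omega$ when $a$ is odd, or by $1+i-\omega$ when $a$ is even, gives an element of $\ll_{1,7}$ of norm $3p$. Hence $3p$ is represented by 1-1-7-7.

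The only point that needs care is that the hypotheses match. Proposition~\ref{H7_to_3p} is stated for an arbitrary odd prime. Although its proof is phrased as ``for all odd primes $p$'', the argument is pointwise in $p$, so it applies to our given $p$. Proposition~\ref{p_H7minusL7} excludes exactly $p=7$, which is consistent with $21=3\cdot 7$ not being represented, by Proposition~\ref{Necessary}.

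Combining this corollary with Corollary~\ref{p_L7} verifies both hypotheses of Proposition~\ref{p+3p}, which yields Main Theorem~2.
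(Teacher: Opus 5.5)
Your proposal is correct and is essentially the paper's own argument: the corollary follows by feeding the element of $\hh_{1,7}\setminus\ll_{1,7}$ of norm $p$ from Proposition~\ref{p_H7minusL7} into the ``if'' direction of Proposition~\ref{H7_to_3p}, and you reproduce that direction accurately (left multiplication by $1-i\omega$ or by $1+i-\omega$ according to the parity of $a$). One minor point: your one-line recap of Proposition~\ref{p_H7minusL7} leaves out the case $\gamma=(1+2a)+2c\,\omega$, which needs conjugation by $1+i-\omega-i\omega$ and is exactly where $p\neq 7$ is used, but since you invoke that proposition as stated this does not affect your proof.
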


Finally, Corollaries~\ref{p_L7} and \ref{3p_L7}, along with
Proposition~\ref{p+3p}, prove the second main result of this paper.

\begin{MainTheorem2}
  The form \emph{1-1-7-7} represents $n \in \NN$ if and only if $n$ is
  not of equal to $2^\varepsilon \cdot 3 \cdot 7^\ell$, for any
  $\varepsilon \in \{0,1\}, \ell \in \NN_0$.
\end{MainTheorem2}

\section{PARI/GP Code}\label{Sec:PARI}

To make computations in a given quaternion order, we represent its
elements by matrices using the left regular representation, i.e.~the
element $\alpha = a v_0 + b v_1 + c v_2 + d v_3$ of a quaternion order
$\hh=\ZZ[v_0,v_1,v_2, v_3]$ is represented by the matrix, in the basis
$\{v_0,v_1,v_2, v_3\}$, of the linear map given by:
\begin{alignat*}{3}
  & \hh \ & \longrightarrow & \; \hh \\
  & h \ & \longmapsto & \; \alpha h
\end{alignat*}

For the orders dealt with above, $\hh_{2,5}$ and $\hh_{1,7}$, an
arbitrary element $\alpha$ is represented by the matrices
\footnotesize
\[ \begin{pmatrix}
	a & -b & -2c-2d & -b-2d \\
	b & a+b & b+2d & b-2c \\
	c & -d & a+b+c+d & b \\
	d & c+d & -b & a+c+d \\
      \end{pmatrix} \; ;\;
      \begin{pmatrix}
	a & -b-d & -2c & -2d \\
	b & a+c & -2d & 2c \\
	c & d & a+c & -b \\
	d & -c & b+d & a \\
\end{pmatrix}, \]
\normalsize
respectively.

Using this, we define the functions \textbf{quat5} and \textbf{quat7}
that take as input the coefficients of $\alpha$ in the basis of
$\hh_{2,5}$ and $\hh_{1,7}$, respectively, and output the appropriate
matrices. Next, we define the functions \textbf{q5conj},
\textbf{q5norm}, \textbf{q7conj} and \textbf{q7norm} that, given a
quaternion $q$ in matrix form, return the conjugate and the norm of
$q$ for the corresponding order. We also define the function
\textbf{quatbr}, that takes as input a quaternion in matrix form and
outputs the coefficients in vector form for better reading (hence the
\textbf{br}).

\footnotesize
\begin{lstlisting}
/* Representation of a H_5-quaternion as a matrix, its
conjugate, and its norm */

quat5(a,b,c,d)=[a,-b,-2*c-2*d,-b-2*d; b,a+b,b+2*d,b-2*c;
                         c,-d,a+b+c+d,b; d,c+d,-b,a+c+d];

q5conj(q)=quat5(q[1,1]+q[2,1]+q[3,1]+q[4,1],-q[2,1],
                                        -q[3,1],-q[4,1]);

q5norm(q)=(q*q5conj(q))[1,1];

/* Given any quaternion in matrix form, returns its
coordinates in the respective basis */

quatbr(q)=[q[1,1],q[2,1],q[3,1],q[4,1]]; 

/*----------------------------------------------------*/

/* Representation of a H_7-quaternion as a matrix, its
conjugate, and its norm */

quat7(a,b,c,d)=[a,-b-d,-2*c,-2*d;b,a+c,-2*d,2*c;
				  c,d,a+c,-b;d,-c,b+d,a];

q7conj(a)=quat7(a[1,1]+a[3,1],-a[2,1],-a[3,1],-a[4,1]);

q7norm(q)=(q*q7conj(q))[1,1];

/* Given any quaternion in matrix form, returns its
coordinates in the respective basis */

quatbr(q)=[q[1,1],q[2,1],q[3,1],q[4,1]]; 
\end{lstlisting}
\normalsize

As we wish to check if $\sigma \gamma \tau^{-1} \in \ll_{a,b}$, for
the orders in question, it is helpful to define functions
\textbf{isH5}, \textbf{isL5}, \textbf{isH7} and \textbf{isL7} that
check if a given quaternion, in matrix form, is in $\hh_{2,5}$,
$\ll_{2,5}$, $\hh_{1,7}$ or $\ll_{1,7}$, respectively. In order to
check $\alpha = a + b v_1 + c v_2 + d v_3$ is in $\ll_{a,b}$, we just
need to check if $\alpha \in \hh_{a,b}$, that is, if
$a,b,c,d \in \ZZ$, and if $\alpha$ satisfy the conditions of
Lemmas~\ref{L5_prop} and \ref{L7_prop}, respectively.

\footnotesize
\begin{lstlisting}
/* Returns True (1) if q is in H5 or in L5, respectively 
and False (0) otherwise. */

isH5(q)={
	for(i=1,4, if(type(q[i,1])!="t_INT", return(0)));
	return(1);
};

isL5(q)=(isH5(q) && (q[2,1]-q[3,1])%4==0 && q[4,1]%2==0);

/* Returns True (1) if q is in H7 or in L7, respectively 
and False (0) otherwise. */

isH7(q)={
	for(i=1,4, if(type(q[i,1])!="t_INT", return(0)));
	return(1);
};

isL7(q)=(isH7(q) && q[3,1]%2==0 && q[4,1]%2==0);
\end{lstlisting}
\normalsize

We also need to find all quaternions, up to left associates, of a
given norm $m$. For that purpose, we first define the functions
\textbf{repH5} and \textbf{repH7}, which take as input a positive
integer $m$ and returns all quaternions of norm $m$ in $\hh_{2,5}$ and
$\hh_{1,7}$, respectively. We will describe each case separately, as
they are different.

Firstly, consider the case of $\hh_{1,7}$, since it is simpler. Using
the previous notations, we want to find all quadruples
$(a,b,c,d) \in \ZZ^4$ such that
$\alpha= a + b i + c \omega + d i \omega$ has norm $m$. From
\begin{equation}\label{norm_alpha_pari_H7}
	4\N(\alpha) = (2a+c)^2 + (2b+d)^2 + 7 c^2 + 7d^2,
\end{equation}
it follows that
\[ \begin{cases}
    - \sqrt{\frac{4m}{7}} < c,d < \sqrt{\frac{4m}{7}}, \\
    - \frac{\sqrt{4m} - c}{2} < a < \frac{\sqrt{4m} - c}{2}, \\
    - \frac{\sqrt{4m} - d}{2} < b < \frac{\sqrt{4m} - d}{2},
\end{cases} \]
which give upper and lower bounds for $(a,b,c,d)$.  The function
\textbf{repH7} will go through all $a,b,c,d$ inside the above
boundaries and, whenever $\N(\alpha)=m$ holds, it adds the respective
quaternion, in matrix form, to a list, which it returns when all
possible quadruples are checked.

Secondly, we reduce the list of all quaternions of norm $m$ to a list
of all the left non-associate quaternions of that norm. In order to do
so, we define an auxiliary function \textbf{AssocL} that takes as
argument two quaternions $r1, r2$, in matrix form, and checks if they
are left associates. The function \textbf{QuatH7LAssoc} takes as input
a positive integer $n$ and returns a list of all the left
non-associate quaternions of that norm $n$. It does this by going
through the list given by \textbf{repH7}, using indexes $i$ and $j$,
and removing the quaternion in position $i$, if the quaternion in
positions $i$ and $j$ are left associate.

Now consider the case of $\hh_{2,5}$. As before, we want to find all
quadruples $(a,b,c,d)$ such that $\alpha= a + b v_1 + c v_2 + d v_3$
has norm $m$. From
\begin{equation}\label{norm_alpha_pari_H5}
  16\N(\alpha) = (4a+2b+2c+2d)^2 + 2 (b+3c+2d)^2 + 5 (2d)^2 + 10 (c-b)^2,
\end{equation}
it follows that
\[ \begin{cases}
    - \frac{\sqrt{\frac{16m}{5}}}{2} < d < \frac{\sqrt{\frac{16m}{5}}}{2}, \\
    - \sqrt{\frac{16m}{10}} < c- b < \sqrt{\frac{16m}{10}}, \\
    - \frac{16m}{2} - 2d < b + 3c < \frac{16m}{2}- 2d, \\
    - \frac{\sqrt{16m} - 2b-2c-2d }{4} < a < \frac{\sqrt{16m} -
      2b-2c-2d}{4}.
  \end{cases} \]
Setting $x= c-b, y=b+3c$, what we have are bounds for $a,x,y,d$. But
noting that $b = c- x, c = \frac{x+y}{4}$, it is clear that we also
have delimited the values of $a,b,c,d$. Using this, the function
\textbf{repH5} will go through all possible integral values of
$a,b,c,d$, and if $\N(\alpha)=m$, then it adds the respective
quaternion $\alpha$ to a list, which it returns at the end.

The function \textbf{PositiveQuater} takes as input $n\in\NN$ and
returns a list of all the quaternions of norm $n$ such that no two
quaternion in the list are associated in $\ll_{2,5}$. It does this by
going through list given by \textbf{repH5}, and simply removing a
quaternion if it is equal to $-\sigma$ for some $\sigma$ on the list,
as the only units of $\ll_{2,5}$ are $\pm1$.

\footnotesize
\begin{lstlisting}
/* List of all representations of m as a^2 + b^2 + 2c^2 +
2d^2 + ac + bd */

repH7(m)={
  local(n,a,b,c,d,L);
  L=List([]);
  n=4*m;
  for(d=-sqrtint(floor(n/7)),sqrtint(floor(n/7)),
    for(c=-sqrtint(floor(n/7)),sqrtint(floor(n/7)), 
      for(b=floor((-sqrtint(n)-d)/2),(sqrtint(n)-d)/2,
        for(a=floor((-sqrtint(n)-c)/2),(sqrtint(n)-c)/2,
          if(n==7*d^2+7*c^2+(2*b+d)^2+(2*a+c)^2,
            listput(~L,quat7(a,b,c,d)))))));
  return(L);
};

/* Checks if r1 and r2 are Left associates, returns True
(1) they are associated and False (0) otherwise.  */

AssocL(r1,r2)={
  if(r1==r2 || -r1==r2 || quat7(0,1,0,0)*r1==r2 || 
  quat7(0,-1,0,0)*r1==r2, return(1));
  return(0);
};

/* Return a list of all quaternions of norm n in H7, up
to left associates */

QuatH7LAssoc(n)={ 
  local(L,i,j);
  i=1;
  L=repH7(n);
  listsort(~L,1);
  while(i <= length(L), 
    j=i+1; 
    while(j <= length(L),            
      if(AssocL(L[i],L[j]), listpop(~L,i); j=i+1, j++));
    i++;
  );
  return(L)
};

/*-----------------------------------------------------*/

/* List of all representations of n as  a^2 + b^2 + 2*c^2
+ 2*d^2 + ab + ac + ad + bd + 2cd  */

repH5(m)={
  local(n,a,b,c,d,x,y,L);
  L=List([]);
  n=16*m;
  for(d=floor(-sqrtint(n/5)/2),sqrtint(n/5)/2,
    for(x=-sqrtint(n/10),sqrtint(n/10),
      for(y=-sqrtint(n/2)-2*d,sqrtint(n/2)-2*d,
        if((x+y)%4!=0, next());
        c=(x+y)/4;
        b=c-x;	
        for(a=floor((-sqrtint(n)-2*b-2*c-2*d)/4),
                              (sqrtint(n)-2*d-2*b-2*c)/4,
          if(n==(4*a+2*(b+c+d))^2+2*(b+3*c+2*d)^2+
                                    5*(2*d)^2+10*(c-b)^2,
            listput(~L,quat5(a,b,c,d)))))));
  return(L);
};

PositiveQuater(n)={ 
  local(L,i,j);
  i=1;
  L=repH5(n);
  listsort(~L,1);
  while( i <= length(L), 
    j=i+1; 
    while(j <= length(L),            
      if(L[i]==-L[j], 
        listpop(~L,i); j=i+1,
        j++));
    i++;
  );
  return(L)
};
\end{lstlisting}
\normalsize

We now present the code that does the verification mentioned at the
end of the proof of Proposition~\ref{H5_to_L5}. We define
\textbf{testMod16}, which tests if, for all $\Tilde{\gamma}$
quaternions of the form $a + b v_1 + c v_2 + dv_3$, with
$0 \leq a,b,c,d \leq 15$, there is a pair $(\sigma, \tau)$ of norm 4,
such that, $\sigma \Tilde{\gamma} \tau^{-1} \in \ll_{2,5}$. The
verification is done by the auxiliary function
\textbf{metaconjugation5}, that takes as input a quaternion $q$, in
matrix form, and a positive integer $n$, and checks if there exists
such pair $(\sigma, \tau)$ of quaternions of norm $n$.

\footnotesize
\begin{lstlisting}
/* Tests if, for all quaternion of the form a+b v1+c v2+
dv3, with coefficients between 0 and 15, there exists a
pair (sigma, tau) of quaternions of norm 4 such that 
sigma*q*tau^(-1) is in L5. If it does not find any, will 
print the respective coefficients [a,b,c,d] */ 

testMod16()={
  local(a,b,c,d,n);
  n=0;
  forvec(X=[[0,15],[0,15],[0,15],[0,15]], 
    [a,b,c,d]=X;
    q=quat5(a,b,c,d);
    if(!metaconjugation5(q,4), 
      print(X, " FAILS");
      return(0)));
  return(1);
};

/* For given quaternion q and positive integer n, checks
if there exists a pair (sigma, tau) of quaternions of
norm n such that sigma*q*tau^(-1) is in L5. If they
exists returns True (1), otherwise returns False (0) */

metaconjugation5(q,n)={
  local(L, aux);
  L=PositiveQuater(n);
  foreach(L, S, 
    foreach(L, T, 
      aux=S*q*(T^(-1));
        if(isL5(aux), return(1))));
  return(0);
};
\end{lstlisting}
\normalsize

Finally, we present the code used to verify what was claimed in last
part of the proof of Proposition~\ref{H7_to_L7}. We define
\textbf{testMod12}, which tests if, for all $\Tilde{\gamma}$
quaternions of the form
\[\Tilde{\gamma} = a + (2b+1)i +(2c+1)\omega + 2di\omega,\]
with $ 0 \leq a \leq 11, 0 \leq b,c,d \leq 5$ and having norm not
divisible by 3, there is pair $(\sigma, \tau)$ of quaternions of norm
6 such that $\sigma \Tilde{\gamma} \tau^{-1} \in \ll_{1,7}$. This
verification is done by the auxiliary function
\textbf{metaconjugation7}, that takes as input a quaternion $q$ and a
positive integer $n$, and checks if there exists such pair
$(\sigma, \tau)$ of quaternions of norm $n$.

\footnotesize
\begin{lstlisting}
/*Tests if, for all quaternion of the form a + (2*b+1)i+
(2*c+1)omega + (2*d) i omega, with coefficients between 0 
and 11, with norm not multiple of 3, there exists a pair 
(sigma, tau) of quaternions of norm 6 such that
sigma*q*tau^(-1)  is in L7. If it does not find any, will
print the respective coefficients [a,b,c,d]. */ 

testMod12()={
  local(a,b,c,d,q);
  forvec(X=[[0,11],[0,5],[0,5],[0,5]],
    [a,b,c,d]=X;
    q=quat7(a,2*b+1,2*c+1,2*d);
    if(q7norm(q)%3==0, next());
    if(!metaconjugation7(q,6), 
      print("FAILS FOR ", X);
	  return(0)));
  return(1);
};

/* For given quaternion q and a positive integer n,
checks if there  exists a pair (sigma, tau) of
quaternions of norm n such that  sigma *q* tau^(-1)
is in L7. If they exists returns True (1), otherwise
returns False (0) */

metaconjugation7(q,n)={
  local(L, aux);
  L=QuatH7LAssoc(n);
  foreach(L, S, 
    foreach(L, T, 
      aux=S*q*(T^(-1));
      if(isL7(aux), return(1))));
  return(0);
};
\end{lstlisting} 
\normalsize
\section*{Funding and Declaration}

The first three authors were partially supported by CMUP, member of
LASI, which is financed by national funds through FCT ---
\textit{Funda\c c\~ao para a Ci\^encia e a Tecnologia, I.P.}, under
the project with reference UID/00144/2025 and associated DOI given by
\url{https://doi.org/10.54499/UID/00144/2025}. The first and second
authors were also supported by the FCT doctoral scholarships with
references 2022.13732.BD
(\url{https://doi.org/10.54499/2022.13732.BD}), and 2025.01855.BD,
respectively. The fourth author was supported by a grant
``\textit{Novos Talentos}'', \#~327831, of the foundation
\textit{Calouste Gulbenkian}. The research of the fifth author was
partially financed by Portuguese Funds through FCT within the Project
UID/00013/2025 (\url{https://doi.org/10.54499/UID/00013/2025}).

The authors have no financial or proprietary interests in any material
discussed in this article.
\bibliographystyle{plain}
\bibliography{biblioMetaQF}
\end{document}